\newtheorem{theorem}{Theorem}[section]
\newtheorem{lemma}[theorem]{Lemma}
\newtheorem{corollary}[theorem]{Corollary}
\newtheorem{proposition}[theorem]{Proposition}
\newtheorem{fact}[theorem]{Fact}
\theoremstyle{definition}
\newtheorem{definition}[theorem]{Definition}
\newtheorem{remark}[theorem]{Remark}
\def\seq{\subseteq}
\def\nv{\text{-}}
\def\inv{^{\text{-}1}}
\def\smd{\raisebox{.4pt}{\textrm{\scriptsize{~\!$\triangle$\!~}}}}
\def\cU{\mathcal{U}}
\def\cB{\mathcal{B}}
\def\cF{\mathcal{F}}
\def\cP{\mathcal{P}}
\def\cS{\mathcal{S}}
\def\N{\mathbb{N}}
\def\Z{\mathbb{Z}}
\def\Stab{\operatorname{Stab}}
\def\VC{\operatorname{VC}}
\def\Av{\operatorname{Av}}
\renewcommand{\phi}{\varphi}
\newcommand{\claim}{\hfill$\dashv_{\text{\scriptsize{claim}}}$}
\newcommand{\abar}{\bar{a}}
   \def\MR#1{}
\title[Quantitative structure of stable sets in groups]{Quantitative structure of stable sets in arbitrary finite groups}
\date{January 22, 2021}
\author[G. Conant]{Gabriel Conant}
\thanks{Partially supported by NSF grant DMS-1855503}
\address{Department of Pure Mathematics and Mathematical Statistics\\
University of Cambridge\\
Cambridge CB3 0WB\\
 UK}
\email{gconant@maths.cam.ac.uk}
\subjclass[2010]{03C45, 11B30, 20D60}
\begin{document}

\begin{abstract}
We show that a $k$-stable set in a finite group can be approximated, up to given error $\epsilon>0$, by left cosets of a subgroup of index $\epsilon^{\nv O_k(1)}$. This  improves the bound in a similar result of Terry and Wolf on stable arithmetic regularity in finite abelian groups, and leads to a quantitative account of work of the author, Pillay, and Terry on stable sets in arbitrary finite groups. We also prove an analogous result for finite stable sets of small tripling in arbitrary groups, which provides a quantitative version of recent work by Martin-Pizarro, Palac\'{i}n, and Wolf. Our proofs use results on VC-dimension, and a finitization of model-theoretic techniques from stable group theory.
\end{abstract}

\maketitle

\section{Introduction}

In \cite{MaShStab}, Malliaris and Shelah established a surprising connection between graph regularity and dividing lines in first-order model theory. In particular, it had been known in the folklore that induced half-graphs witness the necessity for irregular pairs  in  Szemeredi's regularity lemma  (see \cite[Section 1.8]{KomSim}). Using techniques based on model-theoretic stability theory, Malliaris and Shelah proved the converse, i.e., if a finite graph omits half-graphs of a fixed size $k$ then, for any $\epsilon>0$, it admits an $\epsilon$-regular partition in the sense of Szemer\'{e}di, but with no irregular pairs. Moreover, the edge densities between regular pairs in the partition are within $\epsilon$ of $0$ or $1$, and the number of pieces in the partition is at most $\epsilon^{\nv O_k(1)}$ (versus an exponential tower of height $O(\epsilon^{\nv 2})$ in $\epsilon$-regular partitions of arbitrary graphs \cite{FoxLov,GowSRL}).\footnote{Throughout the paper, when we quantify over $\epsilon>0$, we tacitly assume $\epsilon\leq \frac{1}{2}$ to avoid inconsequential issues with asymptotic notation.}

 Since the work in \cite{MaShStab}, ``tame" graph regularity has been developed in several other model-theoretic settings (e.g., \cite{ChStNIP,ChStDis}). These results have further solidified the fruitful connection between model theory and combinatorics, e.g.,  via regularity  lemmas in the setting of bounded VC-dimension \cite{AFN, LovSzeg} (which predate \cite{MaShStab}). 

More recently, Terry and Wolf \cite{TeWo, TeWo2} developed a parallel connection between stability and arithmetic regularity in additive combinatorics, which was introduced by Green \cite{GreenSLAG} as a Fourier-analytic analogue of graph regularity for finite abelian groups. This led to an array of related work in the setting of stability and  bounded VC-dimension, including quantitative results of Alon, Fox, and Zhao \cite{AFZ} and Sisask \cite{SisNIP} for finite abelian groups, and qualitative results of the author, Pillay, and Terry \cite{CPT,CPTNIP} for arbitrary finite groups. 

Before stating precise results, we first define the relevant notion of stability in the setting of groups. As in the case of graphs, the definition is based on omitting half-graphs (or linear orders). Given a group $G$, we say that $A\seq G$ is \textbf{$k$-stable} if there do not exist $a_1,\ldots,a_k,b_1,\ldots,b_k\in G$ such that $a_ib_j\in A$ if and only if $i\leq j$.

\begin{theorem}[Terry \& Wolf \cite{TeWo2}]\label{thm:TW}
Suppose $G$ is a finite abelian group and $A\seq G$ is $k$-stable. Then for any $\epsilon>0$, there is a subgroup $H\leq G$ of index $\exp(\epsilon^{\nv O_k(1)})$ such that, for any $x\in G$, either $|(x+H)\cap A|<\epsilon|H|$ or $|(x+H)\backslash A|<\epsilon|H|$. So if $D$ is the union of all cosets $x+H$ such that $|(x+H)\cap A|\geq\epsilon|H|$, then $|A\smd D|<\epsilon|G|$.
\end{theorem}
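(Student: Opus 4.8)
The plan is to treat $k$-stability as a substantial strengthening of bounded VC-dimension and to reduce, via a finitization of stable group theory, to a bounded-depth iteration of arithmetic regularity. First I would record the VC content of stability: if $A$ is $k$-stable, then the family $\{x+A:x\in G\}$ of translates of $A$ has VC-dimension less than $2^k$. Indeed, a shattered set of size $n$ yields, by Sauer--Shelah together with a Ramsey argument, a half-graph $a_1,\ldots,a_m,b_1,\ldots,b_m$ (i.e.\ $a_i+b_j\in A\iff i\le j$) with $2^m>n$, contradicting $k$-stability once $n\geq 2^k$; equivalently, the bipartite relation $(a,b)\mapsto[a+b\in A]$ on $G\times G$ is a $k$-stable graph.

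Next I would invoke arithmetic regularity for the translates of $A$: since $A$ has bounded VC-dimension there is a subgroup $H_0\leq G$ of index $\exp(\epsilon^{\nv O_k(1)})$ with $1_A$ being $\epsilon$-uniform relative to $H_0$, the exponential arising from the passage from a Bohr set to a genuine subgroup in a general finite abelian group. By Parseval, $\epsilon$-uniformity forces the average over cosets $x+H_0$ of $\delta_x(1-\delta_x)$ to be $O(\epsilon)$, where $\delta_x=|(x+H_0)\cap A|/|H_0|$; hence all but an $\epsilon$-fraction of the cosets are already $\epsilon$-homogeneous, and the union $D$ of the high-density cosets satisfies $|A\smd D|=O(\epsilon|G|)$. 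The substance of the theorem beyond this is the coset-by-coset dichotomy: \emph{every} coset must be homogeneous, so the exceptional cosets have to be eliminated. This is the group-theoretic analogue of Malliaris--Shelah's removal of irregular pairs, and is the step I expect to be the main obstacle. If a coset has $\delta_x$ bounded away from $0$ and $1$, then either $A$ is non-uniform on it, in which case one refines $H_0$ by annihilating a large local Fourier coefficient, at the cost of another factor $\exp(\epsilon^{\nv O_k(1)})$ in the index; or $A$ is $\epsilon$-uniform on it with intermediate density, in which case quasirandom counting exhibits a half-graph of length $k$ (such patterns have positive expected density when $\delta_x$ is bounded away from $0$ and $1$), contradicting $k$-stability.

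Iterating the refinement, the crux is to bound the number of rounds by $O_k(1)$, and this is exactly where the finitization of stable group theory enters: one must show that each refinement round is \emph{forced} by genuine instability of $A$ at the current scale and is matched to a strict decrease of Shelah's local $2$-rank of the formula $x+y\in A$ (which is finite and bounded by a function of $k$ alone), rather than being charged to the $\epsilon$-dependent energy, which by itself would permit $\mathrm{poly}(1/\epsilon)$ rounds or worse. With that in hand, the final subgroup $H$ has $[G:H]\leq\exp(\epsilon^{\nv O_k(1)})$ with every coset $\epsilon$-homogeneous, and summing the homogeneity estimate over all $[G:H]$ cosets gives $|A\smd D|<\epsilon|G|$. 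A cleaner route that sidesteps the iteration would be to define $H$ in one shot as a finitary stabilizer $\Stab(A)$, bound $[G:H]$ directly from the $2$-rank and the VC estimate, and prove by a generic-type argument that $A$ is a union of $H$-cosets up to error $\epsilon|G|$. In either case stability is essential and bounded VC-dimension alone will not do: an interval in $\Z/p\Z$ has VC-dimension $2$ yet satisfies the conclusion only with $[G:H]=p$.
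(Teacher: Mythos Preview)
The paper does not prove Theorem~\ref{thm:TW}; it is quoted from Terry and Wolf as background. The paper's contribution is the strictly stronger Theorem~\ref{thm:SAR} (arbitrary finite groups, polynomial index bound, error $\epsilon|H|$ rather than $\epsilon|G|$), which of course implies Theorem~\ref{thm:TW}. So the relevant comparison is against that proof.

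Your main line of argument---Fourier-based arithmetic regularity, then eliminate bad cosets by an iteration whose depth is bounded by Shelah $2$-rank---is in the spirit of what the paper describes as Terry and Wolf's original method. The paper's own route is different, and is essentially the ``cleaner route'' you gesture at in your final paragraph: define $H=\Stab_\eta(A)$ directly, use stability of the auxiliary relation $x\in Ay\smd Az$ (Lemma~\ref{lem:tech} and Corollary~\ref{cor:tech}) to choose $\eta$ so that $\Stab_\eta(A)=\Stab_{\eta^{4k}}(A)$, whence $H$ is a subgroup of index $(30\eta\inv)^{k-1}$, and then establish the coset dichotomy by a short averaging argument inside $H$ using only Corollary~\ref{cor:VC}. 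There is no Fourier analysis and no iteration, and this is exactly what yields the polynomial bound and the nonabelian extension.

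Your sketch has gaps beyond the one you flag. First, the Parseval step is not right as stated: $\epsilon$-uniformity of $1_A$ relative to $H_0$ in Green's sense controls the nontrivial Fourier coefficients of $1_A$ on cosets, not the densities $\delta_x$ themselves, so it does not give $\mathbb{E}_x\,\delta_x(1-\delta_x)=O(\epsilon)$; uniformity and homogeneity are different conclusions, and you need a VC-type regularity or the full stability argument to get the latter. Second, the bound $\VC<2^k$ is loose; $k$-stability gives $\VC<k$ directly (Fact~\ref{fact:VCstab}), since a shattered $k$-set already encodes the staircase pattern. Third, and most importantly, you correctly identify the iteration bound as the crux but supply no argument: the assertion that each refinement forces a drop in $2$-rank is essentially the whole theorem, and its finitization is delicate. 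In the paper this work is done by Lemma~\ref{lem:tech}, which replaces the rank descent by a direct inductive construction of a half-graph from any failure of the stabilizer collapse, and this is what makes the bounds explicit.
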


Terry and Wolf first proved this result for $G=(\Z/p\Z)^n$, where $p$ is a fixed prime (see \cite{TeWo}). For comparison, Green's  arithmetic regularity lemma in $(\Z/2\Z)^n$ from \cite{GreenSLAG} says that for any $A\seq (\Z/2\Z)^n$ and $\epsilon>0$, there is a subgroup $H$ of index $m\leq \exp^{\epsilon^{\nv O(1)}}(1)$ such that $A$ is ``uniformly distributed"  in all but at most $\epsilon m$ cosets of $H$. Thus Theorem \ref{thm:TW} shows that stable subsets of finite abelian groups enjoy a version of arithmetic regularity with strengthened features analogous to those in  stable graph regularity.

Shortly after \cite{TeWo}, the author, Pillay, and Terry used model-theoretic techniques, in conjunction with an ultraproduct construction, to give a qualitative generalization of Theorem \ref{thm:TW} to arbitrary finite groups, but with ineffective bounds.

\begin{theorem}[C., Pillay, Terry \cite{CPT}]\label{thm:CPT}
Suppose $G$ is a finite group and $A\seq G$ is $k$-stable. Then for any $\epsilon>0$, there is a normal subgroup $H\leq G$ of index $O_{k,\epsilon}(1)$, and a set $D\seq G$ which is a union of  cosets of $H$, such that $|A\smd D|<\epsilon|H|$. Moreover, $H$ is a Boolean combination of bi-translates of $A$ of bounded complexity.
\end{theorem}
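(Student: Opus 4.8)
The plan is to deduce the theorem from local stable group theory by a compactness (ultraproduct) argument; the ineffectiveness of the bounds is exactly the price of compactness. Fix $k$ and $\epsilon>0$, and suppose toward a contradiction that the conclusion fails for this pair regardless of how large an index and Boolean complexity we allow: for every $n\in\N$ there is a finite group $G_n$ and a $k$-stable set $A_n\seq G_n$ such that no normal subgroup $H\leq G_n$ of index at most $n$ which is a Boolean combination of at most $n$ bi-translates $aA_nb$ of $A_n$ admits a union $D$ of its cosets with $|A_n\smd D|<\epsilon|H|$. Fix a nonprincipal ultrafilter $\cU$ on $\N$, let $G=\prod_{\cU}G_n$ and $A=\prod_{\cU}A_n$, and let $\mu$ be the pseudofinite counting measure on $G$, i.e.\ $\mu(\prod_{\cU}X_n)=\lim_{\cU}|X_n|/|G_n|$ for definable sets; then $\mu$ is a finitely additive, left- and right-invariant probability measure, and by {\L}o\'s's theorem the formula $\phi(x;y):=(x\cdot y\in A)$ is $k$-stable in $(G,\cdot,A)$.

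Next, run the local stable group theory for $\phi$, following the model-theoretic part of \cite{CPT}. Boundedness of the local rank $R_\phi$ bounds the index of every $\phi$-definable subgroup of $G$, so the local connected component $H:=G^0_\phi$ — the intersection of all finite-index $\phi$-definable subgroups — is itself $\phi$-definable of some finite index $m$; intersecting its finitely many conjugates, we may take $H$ normal while keeping it $\phi$-definable. Since a group equipped with a stable formula has the local finitely-satisfiable-generics property, $\mu$ is the unique left-invariant Keisler measure and concentrates on $\phi$-generic types, each of which has left stabilizer $H$; as $G/H$ is finite, this forces $A$ to be left-$H$-invariant modulo $\mu$-null, i.e.\ $\mu\bigl((gH)\cap A\bigr)\in\{0,\mu(gH)\}$ for every $g\in G$. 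Taking $D$ to be the union of those cosets $gH$ with $\mu((gH)\cap A)=\mu(gH)$ gives $\mu(A\smd D)=0$. Finally, unwinding $\phi$-definability: $H$ is a Boolean combination of boundedly many instances $\phi(x;b)=(xb\in A)$, hence of right translates of $A$; conjugating, each coset $gH$ — and hence $D$ — is a Boolean combination of at most some $N=N(k,\epsilon)$ bi-translates of $A$.

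Now transfer back. By {\L}o\'s's theorem, for $\cU$-almost every $n$ there is a normal subgroup $H_n\leq G_n$ of index $m$, given by the same bounded Boolean combination of bi-translates of $A_n$, together with a union $D_n$ of its cosets; and since $\mu(A\smd D)=0$, taking $\delta=\epsilon/m$ in the definition of a $\mu$-null set yields $|A_n\smd D_n|<(\epsilon/m)|G_n|=\epsilon|H_n|$ for $\cU$-almost every $n$. The set of such $n$ meets $\{\,n:n\geq\max(m,N)\,\}$, and any $n$ in the intersection contradicts the choice of $(G_n,A_n)$. This proves the theorem for the fixed pair $(k,\epsilon)$, with index bound $m$ and Boolean complexity bound $N$ depending only on $k$ and $\epsilon$.

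The step I expect to be the main obstacle is the purely model-theoretic core of the second paragraph: that $A$ differs from a union of cosets of a $\phi$-definable, finite-index, normal subgroup by a $\mu$-null set. Since only the single formula $\phi$ is assumed stable — not the ambient structure $(G,\cdot,A)$ — one cannot invoke global stable group theory, but must work throughout with local ranks, local $\phi$-generic types, and their interaction with the invariant Keisler measure $\mu$ (for instance, that $\mu$ detects $G^0_\phi$ correctly and that cosets of $H$ are $\mu$-measurable with the expected mass). A secondary but essential point is bookkeeping: all the finitely many parameters involved must be arranged so that the index and the Boolean complexity of $H$ come out as concrete natural numbers, uniformly along the ultraproduct, since that uniformity is what makes the compactness argument close. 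An alternative, ultraproduct-free route would build $H$ directly in each $G_n$ as a bounded intersection of translate-definable sets via stable Ramsey / VC-type arguments — essentially the strategy behind the quantitative refinements pursued in this paper — but for the qualitative statement the argument above is the most economical.
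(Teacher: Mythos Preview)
Your proposal is essentially the original argument from \cite{CPT}, and as a qualitative proof it is correct (with the caveats you already flag about working carefully in the local setting). However, it is not how \emph{this} paper establishes the result. Here Theorem~\ref{thm:CPT} is recovered as a consequence of the quantitative Theorems~\ref{thm:SAR}, \ref{thm:extras}, and \ref{thm:SARn}, whose proofs are entirely finitary: the subgroup is constructed directly as $H=\Stab_\eta(A)$ for a well-chosen $\eta$, where the existence of $\eta$ with $\Stab_\eta(A)=\Stab_{\sigma(\eta)}(A)$ (so that $H$ really is a subgroup) comes from the inductive Lemma~\ref{lem:tech}, and the index bound and coset-by-coset regularity come from Haussler's packing lemma and an averaging argument via Corollary~\ref{cor:VC}. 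Normality is then obtained by passing to the normal core (Theorem~\ref{thm:SARn}), and definability from $\epsilon$-approximations for VC classes (Proposition~\ref{prop:bool}).

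The trade-off is exactly what you anticipate in your last paragraph. Your ultraproduct route is short and structurally transparent---the subgroup is the local connected component $G^0_\phi$ and the null-error statement falls out of uniqueness of the invariant Keisler measure---but the bounds are ineffective by design. The paper's route is longer and more hands-on, but yields $\epsilon^{\nv O_k(1)}$ for the index in Theorem~\ref{thm:SAR} and $\exp^{O_k(1)}(\epsilon\inv)$ in the normal case, together with an explicit description of $H$ as a stabilizer and as a bounded Boolean combination of translates. In short: your argument reproduces \cite{CPT}; the paper's contribution is precisely to replace that compactness step with the direct VC/stability machinery you allude to as the ``alternative, ultraproduct-free route.''
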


Note that Theorem \ref{thm:CPT} is qualitatively stronger than Theorem \ref{thm:TW}, since if $D$ is a union of cosets of $H$ then, for all $g\in G$, either $gH\cap A\seq A\smd D$ or $gH\backslash A\seq A\smd D$. In fact, the proof of Theorem \ref{thm:CPT} goes through a corresponding strengthening of the coset regularity behavior described in Theorem \ref{thm:TW} (see Remark \ref{rem:mainproof}$(2)$ for details). Together, Theorems \ref{thm:TW} and \ref{thm:CPT} raise the following natural questions, which appear (or are implied) in \cite{AFZ, CPT, CPTNIP, SisNIP, TeWo, TeWo2}. 

\begin{enumerate}
\item Can the exponential bound $\exp(\epsilon^{\nv O_k(1)})$ in Theorem \ref{thm:TW} be improved to a polynomial bound $\epsilon^{\nv O_k(1)}$ (to align with the case of graphs)?
\item Is there a comparably quantitative version of Theorem \ref{thm:TW} with the stronger structural approximation $|A\smd D|<\epsilon|H|$?
\item What is an effective bound for $O_{k,\epsilon}(1)$ in Theorem \ref{thm:CPT}?
\end{enumerate}

 The main goal of this article is a quantitative account of Theorem \ref{thm:CPT}, which answers these three questions. Toward this end, the following is our main result. 

\begin{theorem}[main result]\label{thm:SAR}
Suppose $G$ is a finite group and $A\seq G$ is $k$-stable. Then for any $\epsilon>0$, there is a subgroup $H\leq G$ of index $\epsilon^{\nv O_k(1)}$, and a set $D\seq G$ which is a union of left cosets of $H$, such that $|A\smd D|<\epsilon|H|$. 
\end{theorem}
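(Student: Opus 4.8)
The plan is to finitize the model-theoretic argument behind Theorem~\ref{thm:CPT}, replacing every appeal to compactness by a quantitative use of VC-dimension so that the resulting subgroup has index polynomial in $\epsilon$.

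The first ingredient is combinatorial. Since $A$ is $k$-stable it omits the order property of length $k$, and a standard Ramsey argument --- apply Sauer--Shelah and then extract a half-graph from a shattered set --- shows that the family of one- and two-sided translates of $A$, together with their complements, has VC-dimension bounded by some $d=d(k)$. This makes available the $\epsilon$-approximation theorem: for any $\eta>0$ there is a sample $S\seq G$ with $|S|=\mathrm{poly}_k(1/\eta)$ such that averaging over $S$ computes the normalized counting density of every Boolean combination of translates of $A$ of bounded length up to error $\eta$. Passing from genuine densities to sample averages is what replaces the Fourier-analytic and Bogolyubov-type steps of \cite{TeWo2}, and is the source of the improvement from exponential to polynomial bounds.

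The second ingredient is a finitary version of stable group theory. The key point is that if a translate $A'$ of $A$ is generic on a coset of the current subgroup --- boundedly many translates of $A'$ by elements of that subgroup cover the coset --- while its complement is not, then, using the VC-sample to build an approximate $\delta$-stabilizer of $A'$ and a Schlichting/Pillay-type argument to upgrade it to a genuine subgroup, one obtains an honest subgroup, of index $\delta^{\nv O_k(1)}$ in the current one, which separates that coset. I would run this in a refinement loop: starting from $H_0=G$, given $H_i$, if every coset of $H_i$ is $\delta$-homogeneous for $A$ (density less than $\delta$ or more than $1-\delta$) we stop; otherwise $A$ witnesses the generic-but-not-co-generic situation on some coset and we pass to such an $H_{i+1}$. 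A chain condition coming from $k$-stability --- a Baldwin--Saxl-type bound on descending chains of uniformly definable subgroups --- forces termination after $O_k(1)$ steps, so the terminal subgroup $H$ has index $\delta^{\nv O_k(1)}$. Taking $D$ to be the union of the cosets of $H$ on which $A$ has density more than $\frac12$ gives a union of left cosets with $|A\smd D|$ controlled by $\delta$; choosing $\delta$ to be a suitable power $\epsilon^{c(k)}$ then forces $|A\smd D|<\epsilon|H|$ while keeping $[G:H]=\epsilon^{\nv O_k(1)}$.

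I expect the main obstacle to be precisely this last squeeze: arranging the symmetric-difference error to be measured against $|H|$ rather than against $|G|$. Bare VC-regularity only yields $|A\smd D|<\epsilon|G|$; eliminating the factor $[G:H]$ is where stability, rather than mere NIP, is essential, since it is the finitary shadow of the fact that a stable set is, up to a null set, a union of cosets of a type-definable subgroup. Making that mechanism effective --- controlling the number and the internal error of the ``bad'' cosets in terms of $k$ alone while the index remains polynomial in $\epsilon$ --- is the real work, and it is what forces careful bookkeeping of the parameters $\eta$, $\delta$ and the $O_k(1)$ iteration count. One further point: unlike Theorem~\ref{thm:CPT} we do not insist that $H$ be normal, since intersecting conjugates would cost an extra unbounded power; this is why only left cosets appear in the conclusion.
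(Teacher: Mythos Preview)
Your proposal correctly identifies the ingredients --- bounded VC-dimension of translates, measure-stabilizers, and the crucial difficulty of upgrading the error from $\epsilon|G|$ to $\epsilon|H|$ --- but the mechanism you propose does not close that gap, and the paper's argument is structurally different.

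Your refinement loop terminates when every coset of $H$ is $\delta$-homogeneous for $A$, which gives only $|A\smd D|<\delta|G|=\delta\,[G{:}H]\,|H|$. To reach $\epsilon|H|$ you would need $\delta\,[G{:}H]<\epsilon$; but $[G{:}H]$ is at best $\delta^{-O_k(1)}$, and $\delta\cdot\delta^{-O_k(1)}$ does not tend to zero with $\delta$. You flag this as ``the real work'' but supply no device to carry it out. The Baldwin--Saxl chain condition you invoke is also not available as stated: it bounds chains of \emph{uniformly definable subgroups}, whereas your $H_i$ are approximate stabilizers depending on evolving parameters and are not instances of a single formula, so there is no obvious reason the loop halts in $O_k(1)$ steps.

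The paper does not iterate. It proves a measure-gap lemma (Lemma~\ref{lem:tech}, Corollary~\ref{cor:tech}): for any increasing $\sigma$, stability of the relation $x\in Ay\smd Az$ forces some $\eta\in[\epsilon^{O_k(1)},\epsilon]$ with $\Stab_\eta(A)=\Stab_{\sigma(\eta)}(A)$; failure lets one inductively build a half-graph of length $k_*$ from instances of $\phi_A$ with calibrated densities. Taking $\sigma(x)=x^{4k}$ makes $H:=\Stab_\eta(A)$ a genuine subgroup of index $m\le(30\eta^{-1})^{k-1}$, and --- this is the point your outline lacks --- the equality $H=\Stab_{\eta^{4k}}(A)$ with $\eta^{4k}\ll m^{-2}\eta$ is precisely what upgrades per-coset error from $\eta|H|$ to $m^{-1}\eta|H|$: if both $|gH\cap A|$ and $|gH\backslash A|$ exceeded $m^{-1}\eta|H|$, a genericity-and-averaging argument inside $gH$ produces $h\in H$ with $|Ah\smd A|>\eta^{4k}|G|$, contradicting $h\in\Stab_{\eta^{4k}}(A)$. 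Summing over $m$ cosets then gives $|A\smd D|<\eta|H|\le\epsilon|H|$ directly, with no refinement loop and no Schlichting step.
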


Before addressing the differences between Theorems \ref{thm:SAR} and  \ref{thm:CPT}, we first compare Theorem \ref{thm:SAR} to  Theorem \ref{thm:TW}, and discuss the strategy of the proof. Note that Theorem \ref{thm:SAR} generalizes and strengthens Theorem \ref{thm:TW}, and provides positive answers to questions $(1)$ and $(2)$ above. The bound in Theorem \ref{thm:SAR} can also be written more explicitly as $O_k(\epsilon^{\nv N_k})$ where $N_k=k^{\exp^2(2k)}$ (see Remark \ref{rem:mainproof}$(1)$).

The proof methods used by Terry and Wolf in \cite{TeWo,TeWo2} combine discrete Fourier analysis in finite abelian groups together with an iterative construction based on the correspondence between coding orders and binary trees in graphs (see \cite[Theorem 2]{TeWo}). This correspondence was first proved by Shelah \cite{Shbook} via a set-theoretic argument based on a combinatorial theorem of Erd\H{o}s and Makkai \cite{ErdMak}, and was later made quantitatively explicit by Hodges \cite{Hodtrees}. It is also a key ingredient in Malliaris and Shelah's work on stable graph regularity. In \cite{TeWo2}, Terry and Wolf also employ tools involving regular Bohr sets and an almost-periodicity result of Sisask \cite{SisNIP} for sets of bounded VC-dimension in finite abelian groups.

Altogether, the connection between model theory and the work in  \cite{TeWo,TeWo2}  is largely grounded in pure stability theory and the notion of Shelah $2$-rank (i.e., Definition II.1.1 in \cite{Shbook} with $\lambda=2$). By contrast, our proof of Theorem \ref{thm:SAR} is more connected to  stable group theory and techniques involving measure-stabilizers and generic types. From a combinatorial perspective, these tools are similar to those used by Alon, Fox, and Zhao \cite{AFZ} in their arithmetic regularity lemma for sets of bounded VC-dimension in finite abelian groups of bounded exponent. 

To elaborate, suppose $G$ is a finite group and $A\seq G$ is $k$-stable. Given $\epsilon>0$, define the stabilizer $\Stab_\epsilon(A)=\{x\in G:|Ax\smd A|\leq\epsilon|G|\}$.  Using VC-theory, one can show that these  stabilizers are large in the sense that $G$ can be covered by $O_{k,\epsilon}(1)$ right translates of $\Stab_\epsilon(A)$ (see Corollary \ref{cor:VC}$(b)$). In Corollary \ref{cor:tech}, we show that for any  function $\sigma\colon (0,1)\to (0,1)$ and any $\epsilon>0$, there is some $\eta\geq\Omega_{\sigma,k,\epsilon}(1)$ such that $\Stab_\eta(A)\seq \Stab_{\sigma(\eta)}(A)$. By choosing $\sigma$ appropriately, we conclude  that $H:=\Stab_\eta(A)$ is a subgroup of $G$ and, moreover, we prove that any left coset of $H$ is either almost contained in $A$ or almost disjoint from $A$ up to small error in terms of $\epsilon$ and the index of $H$. This yields the statement of Theorem \ref{thm:SAR} modulo calculating polynomial bounds, which originate from VC-theory (in particular, \emph{Haussler's Packing Lemma}; see Theorem \ref{thm:VC} and Remark \ref{rem:VCdim}). 

The stabilizers defined above arise in additive combinatorics in the setting of ``popular difference sets" (see, e.g., \cite[Section 4]{LeLuSch}), and are also directly aligned with ingredients from stable group theory and its generalization in \cite{HPP} to  ``fsg" groups definable in NIP theories. In our work, the use stability is concentrated entirely in Corollary \ref{cor:tech} (described above).  Roughly speaking, this corollary reflects the result from \cite{CPT} that in a pseudofinite group, if $\cB$ is the Boolean algebra generated by the right translates of a fixed stable internal set, then the pseudofinite counting measure takes only finitely many values on $\cB$ (see Corollary \ref{cor:UP}). However, in \cite{CPT} this result follows as a \emph{consequence} of the model-theoretic tools developed for the proof of Theorem \ref{thm:CPT}, and relies on local stable group theory as formulated by Hrushovski and Pillay  in \cite{HrPiGLF}. Thus the focus of this paper is to obtain a more direct proof, which can be carried out quantitatively in the finite setting.

In order to fully recover Theorem \ref{thm:CPT}, and answer the third question above, we need to deal with normality of $H$ and ``definability" in terms of translates of $A$. This is done in Sections \ref{sec:norm} and \ref{sec:def}, where we prove the following additional results.

\begin{theorem}\label{thm:extras}
Suppose $G$ is a finite group and $A\seq G$ is $k$-stable. Fix $\epsilon>0$. 
\begin{enumerate}[$(a)$]
\item There is a subgroup $H\leq G$, which satisfies the conclusion of Theorem \ref{thm:SAR} and is of the form $\Stab_\eta(A)$ where $\epsilon^{O_k(1)}\leq\eta\leq\epsilon$. Moreover, there are $n\leq \epsilon^{\nv O_k(1)}$, $\ell\leq {n\choose \epsilon n}$, and $g_1,\ldots,g_n\in G$ such that $H=\bigcup_{t=1}^\ell \bigcap_{i=1}^n X_{t,i}$ where each $X_{t,i}$ is either $g_iA$ or $G\backslash g_iA$.
\item There is a normal subgroup $H\leq G$ satisfying the conclusion of Theorem \ref{thm:SAR}, except with index $\exp^{O_k(1)}(\epsilon\inv)$. Moreover, $H=\bigcap_{g\in G}gH_0g\inv$ for some subgroup $H_0\leq G$ satisfying the same description as in part $(a)$, but with $\eta\inv,n\leq \exp^{O_k(1)}(\epsilon\inv)$. 
\end{enumerate}
\end{theorem}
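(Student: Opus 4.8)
For part $(a)$, the plan is to take for $H$ the subgroup $\Stab_\eta(A)$ that the proof of Theorem \ref{thm:SAR} already produces, and then to show it is \emph{literally} a Boolean combination of left translates of $A$ of the claimed size. I would first note that the proof of Theorem \ref{thm:SAR} applies Corollary \ref{cor:tech} to some function $\sigma$, and that replacing $\sigma$ by $t\mapsto\min\{\sigma(t),t/2\}$ changes none of the polynomial bookkeeping but additionally forces $\Stab_\eta(A)=\Stab_{\eta/2}(A)$; so we may fix $\eta$ with $\epsilon^{\nv O_k(1)}\le\eta\le\epsilon$ for which $H:=\Stab_\eta(A)$ both satisfies the conclusion of Theorem \ref{thm:SAR} and equals $\Stab_{\eta/2}(A)$. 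Next, for $x\in G$ put $Ax\inv:=\{h\in G:hx\in A\}$, so that $|A\smd Ax\inv|=|Ax\smd A|$. Since the right translates of the $k$-stable set $A$ omit half-graphs of height $k$ (such a half-graph would contradict $k$-stability), they have VC-dimension $O_k(1)$, and taking symmetric difference with the fixed set $A$ does not increase this, so $\cF:=\{A\smd Ag:g\in G\}$ has VC-dimension $d=O_k(1)$. By Theorem \ref{thm:VC} there is a $\delta$-approximation $S=\{h_1,\dots,h_n\}\seq G$ for $\cF$ with $\delta:=\eta/4$ and $n\le\epsilon^{\nv O_k(1)}$. Writing $N(x):=|S\cap(A\smd Ax\inv)|$ and $\theta:=3\eta/4$: if $x\in\Stab_\eta(A)=\Stab_{\eta/2}(A)$ then $N(x)/n\le|Ax\smd A|/|G|+\delta\le\theta$, while if $N(x)\le\theta n$ then $|Ax\smd A|/|G|\le N(x)/n+\delta\le\eta$; hence $H=\{x\in G:N(x)\le\theta n\}$ exactly. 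As $Y_i:=\{x\in G:h_i\in A\smd Ax\inv\}$ equals $h_i\inv A$ if $h_i\notin A$ and $G\backslash h_i\inv A$ if $h_i\in A$, writing $g_i:=h_i\inv$ gives
\[
H=\bigcup_{\substack{T\seq\{1,\dots,n\}\\ |T|\le\theta n}}\Bigl(\,\bigcap_{i\in T}Y_i\,\cap\,\bigcap_{i\notin T}(G\backslash Y_i)\,\Bigr),
\]
which is of the required form with each $X_{t,i}$ one of $g_iA,\ G\backslash g_iA$ and $\ell=\sum_{j\le\theta n}\binom nj\le\binom n{\epsilon n}$ (using $\theta<\epsilon\le\tfrac12$).

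For part $(b)$, the plan is to take $H$ to be the normal core of an $H_0=\Stab_\eta(A)$ of the sort produced in part $(a)$, with $\eta$ chosen small enough to absorb the factorial growth of index under coring. Let $c_k=O_k(1)$ be such that $[G:\Stab_t(A)]\le t^{\nv c_k}$ for all $t$ (Corollary \ref{cor:VC}$(b)$), and apply Corollary \ref{cor:tech} to $\sigma(t):=\min\{\,t/2,\ \tfrac\epsilon2\,((t^{\nv c_k})!)\inv\,\}$; since $\sigma$ involves a factorial and the construction iterates only $O_k(1)$ times, the quantitative form of Corollary \ref{cor:tech} gives $\eta\le\epsilon$ with $\eta\inv\le\exp^{O_k(1)}(\epsilon\inv)$ for which $\Stab_\eta(A)=\Stab_{\sigma(\eta)}(A)$. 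As $\sigma(\eta)\le\eta/2$, part $(a)$ applies to $H_0:=\Stab_\eta(A)$: it is a subgroup, a Boolean combination of left translates of $A$ with $n\le\eta^{\nv O_k(1)}\le\exp^{O_k(1)}(\epsilon\inv)$, and — by the standard popular-difference computation, i.e.\ averaging $|Ah\smd A|\le\sigma(\eta)|G|$ over $h\in H_0$ and regrouping over the left $H_0$-cosets $C$ to get $\sum_C\min\{|A\cap C|,|C\backslash A|\}\le\sigma(\eta)|G|$, together with $\sigma(\eta)\le\tfrac\epsilon2[G:H_0]\inv$ — it satisfies the conclusion of Theorem \ref{thm:SAR}. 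Thus $H_0$ satisfies the full description of part $(a)$. Now set $H:=\bigcap_{g\in G}gH_0g\inv$, the normal core of $H_0$: it is a normal subgroup with $[G:H]\le[G:H_0]!\le(\eta^{\nv c_k})!\le\exp^{O_k(1)}(\epsilon\inv)$, and since $H\seq H_0=\Stab_{\sigma(\eta)}(A)$ with $\sigma(\eta)\le\tfrac\epsilon2[G:H]\inv$, the identical coset computation over the left $H$-cosets produces $D$ (the union of those $C$ with $|A\cap C|\ge\tfrac12|H|$) with $|A\smd D|\le\sigma(\eta)|G|<\epsilon|H|$. So $H$ is a normal subgroup satisfying the conclusion of Theorem \ref{thm:SAR} with index $\exp^{O_k(1)}(\epsilon\inv)$, realised as $\bigcap_g gH_0g\inv$ with $H_0$ as required.

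The hard part will be the calibration in $(b)$: one has to anticipate that the normal core of an index-$m$ subgroup may have index $m!$, offset this by feeding Corollary \ref{cor:tech} a function decaying like $\epsilon/(t^{\nv O_k(1)})!$ rather than polynomially, and then — the real content — verify via the quantitative form of Corollary \ref{cor:tech} that this inflates $\eta\inv$, and hence the final index and the Boolean complexity $n$, only from $\epsilon^{\nv O_k(1)}$ to the tower $\exp^{O_k(1)}(\epsilon\inv)$, and no worse. The remaining ingredients — the VC-dimension bounds, Theorem \ref{thm:VC}, and the cosetwise approximation already implicit in the proof of Theorem \ref{thm:SAR} — are used essentially off the shelf.
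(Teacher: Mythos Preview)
Your argument is correct, and part $(a)$ is essentially the paper's own approach: the paper simply cites Proposition~\ref{prop:bool} (which you have reproved inline) applied with $\kappa=\eta$ and $\lambda=\sigma(\eta)$ where $\sigma(x)=x^{4k}$ already satisfies $\sigma(\eta)\le\eta/2$, so your modification of $\sigma$ is unnecessary. Two small points: the $\delta$-approximation you invoke is Theorem~\ref{thm:VCA}, not Theorem~\ref{thm:VC}; and your observation that $S\mapsto S\smd A$ preserves VC-dimension exactly is in fact cleaner than the paper's appeal to the $10d$ bound of \cite[Lemma~4.4]{LovSzeg}.

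Part $(b)$ takes a genuinely different route. The paper instead follows the proof of Theorem~\ref{thm:SARn}: it applies Corollary~\ref{cor:tech} with $\sigma(x)=\exp(-x^{-k})$, passes to the normal core, and then \emph{reruns the Claim} from the proof of Theorem~\ref{thm:SAR} for the smaller subgroup $H$, obtaining the stronger coset-by-coset regularity $|gH\cap A|<m^{-1}\epsilon|H|$ or $|gH\backslash A|<m^{-1}\epsilon|H|$ (Remark~\ref{rem:mainproof}(2)). Your approach chooses $\sigma$ pre-tuned to absorb the factorial blow-up of the normal core, and replaces the Claim entirely by the averaging identity $\frac{1}{|H|}\sum_{h\in H}|Ah\smd A|=\frac{2}{|H|}\sum_C|A\cap C|\,|C\backslash A|\ge\sum_C\min\{|A\cap C|,|C\backslash A|\}$. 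This is more elementary and self-contained, but it only yields the global bound $|A\smd D|<\epsilon|H|$, not the uniform coset regularity that the paper's method gives for free. Either way the tower height and Boolean complexity come out the same, since both choices of $\sigma$ lose one exponential per iteration of $\sigma_{k_*,1}$.
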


In part $(a)$, the expression of $H$ as a Boolean combination of right translates of $A$ is made possible by Lemma \ref{lem:tech} and the existence of $\epsilon$-approximations for sets systems of bounded VC-dimension (see Theorem \ref{thm:VCA}).
Note also that in part $(b)$, the number of conjugates of $H_0$ needed to obtain $H$ is bounded by the index of $H_0$. The iterated exponential bound arising in part $(b)$ is discussed after Theorem \ref{thm:SARn}. 

Recently in \cite{MPW}, Martin-Pizarro, Palac\'{i}n, and Wolf used model theoretic tools from local stability theory (including results from \cite{CPT} used for the proof of Theorem \ref{thm:CPT}), to obtain the following qualitative result for finite stable sets of small tripling in arbitrary groups.

\begin{theorem}[Martin-Pizarro, Palac\'{i}n, Wolf \cite{MPW}]\label{thm:MPW}
Suppose $G$ is a group and $A\seq G$ is a finite nonempty $k$-stable set with $|AAA|\leq c|A|$. Then for any $\epsilon>0$, there is a subgroup $H\leq G$ satisfying the following properties.
\begin{enumerate}[$(i)$]
\item $H\seq AA\inv$ and $A\seq CH$ for some $C\seq A$ of size $O_{k,c,\epsilon}(1)$.
\item There is a set $D\seq C$ such that $|A\smd DH|<\epsilon|A|$.
\end{enumerate}
\end{theorem}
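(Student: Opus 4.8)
The plan is to re-run the proof of Theorem~\ref{thm:SAR} with the normalized counting measure on $A$ --- more precisely, on a bounded neighbourhood of $A$ --- in place of the counting measure on the ambient group. The only function of the hypothesis $|AAA|\leq c|A|$ is to ensure that the relevant neighbourhoods of $A$ do not blow up, so that the VC-theoretic and measure-theoretic inputs behave exactly as if we were working inside a finite group of size $O_c(|A|)$.

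For the setup, recall from the nonabelian Pl\"unnecke--Ruzsa estimates that $|AAA|\leq c|A|$ forces $|(A\cup A\inv\cup\{1\})^m|\leq c^{O(m)}|A|$ for every $m\geq1$. Fix a sufficiently large absolute constant $m$, put $S=(A\cup A\inv\cup\{1\})^m$, so that $A\seq S=S\inv$, $1\in S$, $|S|\leq K|A|$ with $K=c^{O(1)}$, and $AS\cup SA\cup AAA\inv\seq S$, and equip the subsets of $G$ with the measure $\mu(X)=|X\cap S|/|S|$; by the standard ``measure on an approximate group'' bookkeeping, $\mu$ is invariant under translation by elements of $S$ up to multiplicative errors of size $O_c(1)$. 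Then introduce the relative stabilizer
\[
\Stab_\eta(A)=\{x\in G:|Ax\smd A|\leq\eta|A|\}.
\]
For $\eta<2$ we have $Ax\cap A\neq\emptyset$ whenever $x\in\Stab_\eta(A)$, so $\Stab_\eta(A)\seq AA\inv$ (up to the usual left/right convention), and $\Stab_\eta(A)$ is approximately closed under multiplication since $|Axy\smd A|\leq|Axy\smd Ay|+|Ay\smd A|=|Ax\smd A|+|Ay\smd A|$. Finally, $k$-stability of $A$ bounds the VC-dimension of the set system $\{Ag:g\in G\}$ by $O_k(1)$, exactly as in the proof of Theorem~\ref{thm:SAR}.

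With these ingredients I would carry over the proof of Theorem~\ref{thm:SAR} more or less verbatim, now tracking the $O_c(1)$ errors introduced by approximate invariance of $\mu$. Haussler's Packing Lemma (Theorem~\ref{thm:VC}), applied on the ground set $S$ as in Corollary~\ref{cor:VC}, gives that $S$ is covered by $O_{k,c,\epsilon}(1)$ translates of $\Stab_\epsilon(A)$, and hence $|\Stab_\epsilon(A)|\geq\Omega_{k,c,\epsilon}(1)\,|A|$; the analogues of Lemma~\ref{lem:tech} and Corollary~\ref{cor:tech} then yield, for any prescribed $\sigma\colon(0,1)\to(0,1)$, some $\eta\geq\Omega_{\sigma,k,c,\epsilon}(1)$ with $\Stab_\eta(A)\seq\Stab_{\sigma(\eta)}(A)$. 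Choosing $\sigma$ as in the proof of Theorem~\ref{thm:SAR}, the same self-improvement argument shows that $H:=\Stab_\eta(A)$ is a genuine subgroup, with $|H|\leq|AA\inv|=O_c(|A|)$ and $|H|\geq\Omega_{k,c,\epsilon}(1)\,|A|$, and that every coset $cH$ with $c$ in a bounded neighbourhood of $A$ is either $\mu$-almost contained in $A$ or $\mu$-almost disjoint from $A$, with an error $\epsilon'|H|$ that can be made as small as we wish in terms of $k,c,\epsilon$. Now $H\seq AA\inv$ is half of $(i)$; a greedy covering argument (distinct cosets $aH$, $a\in A$, are pairwise disjoint and contained in $AAA\inv\seq S$, so at most $|S|/|H|=O_{k,c,\epsilon}(1)$ of them fit) produces $C\seq A$ of bounded size with $A\seq CH$, completing $(i)$; and taking $C$ to consist of distinct coset representatives, $D=\{c\in C:|cH\cap A|\geq\tfrac12|H|\}$, and summing the coset dichotomy over the $O_{k,c,\epsilon}(1)$ relevant cosets gives $|A\smd DH|<|C|\,\epsilon'|H|$, which is $<\epsilon|A|$ once $\epsilon'$ is chosen small enough, establishing $(ii)$.

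The main obstacle is the error bookkeeping in porting Lemma~\ref{lem:tech} and Corollary~\ref{cor:tech}: in the proof of Theorem~\ref{thm:SAR} the counting measure is exactly bi-invariant, whereas here $\mu$ is invariant only up to $O_c(1)$ multiplicative errors on bounded neighbourhoods, so every use of invariance must be replaced by an approximate version and the accumulated error controlled tightly enough that the iteration producing the ``drop'' $\eta\mapsto\sigma(\eta)$ still terminates with $\eta\geq\Omega_{\sigma,k,c,\epsilon}(1)$; this is where the $c$-dependence of all the bounds enters. Once those estimates are in place, verifying that $H$ is an honest subgroup and extracting $(i)$ and $(ii)$ are routine adaptations of the finite-group argument.
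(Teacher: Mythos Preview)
Your approach is on the right track and would work, but it takes a more circuitous route than the paper. The key difference: you introduce an ``approximately invariant'' measure $\mu$ on a large symmetric neighbourhood $S$ and flag the tracking of multiplicative $O_c(1)$ errors from the failure of exact invariance as the main obstacle. The paper avoids this entirely. Lemma~\ref{lem:tech} is \emph{already} stated for an arbitrary finite subset $X\subseteq G$ together with a doubling parameter $r$ satisfying $|X\inv X|\leq r|X|$, and its proof uses only the exact counting measure on $X$; translation enters solely through Corollary~\ref{cor:VC}$(a)$, which needs nothing more than $|X\inv C|\leq r|X|$ for $C\subseteq X$. So the paper (in its proof of the quantitative version, Theorem~\ref{thm:SARtrip}) simply takes $X=AA\inv A$, notes $|X\inv X|\leq c^4|X|$ via the Ruzsa triangle inequality (Proposition~\ref{prop:PRI}), and applies Lemma~\ref{lem:tech} directly to $\phi_A$ with $r=c^4$. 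Since $\Stab^A_\eta(A)\subseteq A\inv A$ forces $Ag\smd A\subseteq X$ for the relevant $g$, the lemma yields $\eta$ with $\Stab^A_\eta(A)=\Stab^A_{\eta^{4k}}(A)$ outright, hence a genuine subgroup, with no approximation errors to carry. The coset dichotomy and covering then proceed exactly as you describe.

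In short, your ``main obstacle'' is self-inflicted: the $c$-dependence is already absorbed into the parameter $r$ in the general form of Lemma~\ref{lem:tech}, so there is nothing to port and no approximate-invariance bookkeeping to do. What your framing buys is perhaps a more conceptual picture (working inside an approximate group with its natural measure), but the paper's choice of a concrete finite $X$ makes the argument shorter and the bounds explicit.
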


The previous result is reminiscent of the \emph{Bogolyubov-Ruzsa Lemma} for abelian groups of bounded exponent (see \cite[Theorem 11.1]{SanBR}), which says that if $G$ is an abelian group of exponent $r$, and $A\seq G$ is a finite set with $|A+A|\leq c|A|$, then there is a subgroup $H\leq G$ contained in $2A-2A$ such that $A\seq C+H$ for some $C\seq A$ of size $\exp(O_r((\log 2c)^4))$. 
Part $(i)$ of Theorem \ref{thm:MPW} (with constant $\epsilon$) can be seen as a qualitative analogue of this result for for stable sets in arbitrary groups. In Section \ref{sec:MPW}, we prove a  version of Theorem \ref{thm:MPW}  with polynomial bounds.

\begin{theorem}\label{thm:SARtrip}
Suppose $G$ is a group and $A\seq G$ is a finite nonempty $k$-stable set with $|AA\inv A|\leq c|A|$. Then for any $\epsilon>0$, there is a subgroup $H\leq G$ satisfying the following properties.
\begin{enumerate}[$(i)$]
\item $H\seq A\inv A$ and $A\seq CH$ for some $C\seq A$ of size $O_k((c\epsilon\inv)^{O_k(1)})$.
\item There is a set $D\seq C$ such that $|A\smd DH|<\epsilon|H|$.
\end{enumerate}
\end{theorem}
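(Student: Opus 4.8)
The plan is to run the argument behind Theorem \ref{thm:SAR} relative to the normalized counting measure on $A$ rather than on the (possibly infinite) group $G$, using the small tripling hypothesis to confine all computations to sets of size $c^{\nv O(1)}|A|$. Fix $A$ as in the statement and, for $\delta\in(0,1]$, set $S_\delta(A):=\{x\in G:|Ax\smd A|\leq\delta|A|\}$. One checks immediately that $1\in S_\delta(A)$, that $S_\delta(A)=S_\delta(A)\inv$, and that $S_\delta(A)S_{\delta'}(A)\seq S_{\delta+\delta'}(A)$ (from $|Axy\smd A|\leq|Axy\smd Ay|+|Ay\smd A|=|Ax\smd A|+|Ay\smd A|$, since right translation is a bijection of $G$). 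Moreover, if $x\in S_\delta(A)$ with $\delta<2$ then $Ax\cap A\neq\emptyset$, so $S_\delta(A)\seq A\inv A$; this will yield the containment $H\seq A\inv A$ in $(i)$. Finally, by standard sumset estimates for sets of small tripling, $|AA\inv A|\leq c|A|$ forces $|Y|\leq c^{\nv O(1)}|A|$ for any fixed bounded-length product $Y$ of copies of $A$ and $A\inv$; below we use this only for $Y\in\{AA\inv,\ AA\inv A\}$.

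Next I would set up, relative to the uniform measure $\mu$ on $AA\inv$, the two ingredients that the proof of Theorem \ref{thm:SAR} relies on, each of which transfers verbatim since its only inputs are $k$-stability and Haussler's Packing Lemma (Theorem \ref{thm:VC}). \emph{Covering.} Since $A$ is $k$-stable, the family $\{Ax\cap AA\inv:x\in G\}$ has VC-dimension $O_k(1)$ (Remark \ref{rem:VCdim}); so for any $\delta\in(0,1]$, a maximal subset $C\seq A$ for which the sets $\{Aa\inv:a\in C\}$ are pairwise more than $\delta|A|$-apart under $|\cdot\smd\cdot|$ satisfies $|C|\leq(c\delta\inv)^{O_k(1)}$ by Haussler packing (a set $\delta|A|$-separated in counting measure is $\delta c^{\nv O(1)}$-separated in $\mu$), and maximality gives, for each $a\in A$, some $c\in C$ with $|Aa\inv\smd Ac\inv|\leq\delta|A|$, whence $c\inv a\in S_\delta(A)$ (multiply on the right by $a$) and so $a\in cS_\delta(A)$. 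Thus $A\seq C\cdot S_\delta(A)$ with $C\seq A$ of size $(c\delta\inv)^{O_k(1)}$; this is the analogue of Corollary \ref{cor:VC}$(b)$. \emph{Stabilizer rigidity.} The finitary principle underlying Corollary \ref{cor:tech} --- for every $\sigma\colon(0,1)\to(0,1)$ and $\epsilon_0>0$ there is $\eta$, with $\eta\inv$ bounded in terms of $\sigma,k,\epsilon_0$ only, such that $S_\eta(A)\seq S_{\sigma(\eta)}(A)$ --- holds with $|A|$ in place of $|G|$ by exactly the same pigeonhole/ladder argument on VC-bounded families.

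With these in hand, I would assemble $H$ as follows. Apply stabilizer rigidity with a function $\sigma$ chosen so that $\sigma(t)\leq t/2$ and $\sigma(t)\leq\frac12\epsilon(t/c)^{D_k}$, where $D_k$ is the $O_k(1)$ exponent produced by the covering ingredient; this gives $\eta$ with $\eta\inv$ polynomial in $c\epsilon\inv$ and $S_\eta(A)\seq S_{\sigma(\eta)}(A)$. Since $\sigma(\eta)\leq\eta$, monotonicity forces $H:=S_\eta(A)=S_{\sigma(\eta)}(A)$, and then $H\cdot H=S_{\sigma(\eta)}(A)S_{\sigma(\eta)}(A)\seq S_{2\sigma(\eta)}(A)\seq S_\eta(A)=H$, so $H$ is a subgroup with $H\seq A\inv A$. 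The covering ingredient (with $\delta=\eta$) produces $C\seq A$ with $A\seq CH$ and $|C|\leq(c\eta\inv)^{O_k(1)}=O_k((c\epsilon\inv)^{O_k(1)})$, whence also $|A|\leq|C|\,|H|\leq(c\eta\inv)^{O_k(1)}|H|$; this is $(i)$. For $(ii)$, the gain is that $H=S_{\sigma(\eta)}(A)$ stabilizes $A$ to the improved precision $\sigma(\eta)|A|$: averaging $|Ah\smd A|$ over $h\in H$ and decomposing along cosets of $H$ (using that, for $h\in H$, $Ah$ meets exactly the cosets that $A$ meets and $Ah\cap gH=(A\cap gH)h$) yields, with $m_{gH}:=|gH\cap A|$, the chain $\sum_{gH}\min(m_{gH},|H|-m_{gH})\leq\sum_{gH}\frac{2m_{gH}(|H|-m_{gH})}{|H|}=\mathbb{E}_{h\in H}|Ah\smd A|\leq\sigma(\eta)|A|$, the sums running over the $\leq|C|$ cosets of $H$ meeting $A$. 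Taking $D\seq C$ to consist of representatives of the cosets with $m_{gH}>|H|/2$ then gives $|A\smd DH|=\sum_{gH}\min(m_{gH},|H|-m_{gH})\leq\sigma(\eta)|A|<\epsilon|H|$, the last inequality by $|A|\leq(c\eta\inv)^{O_k(1)}|H|$ and the choice of $\sigma$.

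The real difficulty is not any individual step but the interlocking of two pieces of bookkeeping. First, one must keep left and right translates straight, so that the single stabilizer $S_\delta(A)$ --- defined via right translates --- simultaneously delivers $H\seq A\inv A$ and supports the \emph{left}-coset covering $A\seq CH$ with $C\seq A$; this is why the covering ingredient is phrased through the family $\{Aa\inv:a\in A\}$ rather than $\{Aa:a\in A\}$, and why the averaging in $(ii)$ closes up. Second, and more seriously, the choice of $\sigma$ is a fixed-point problem: $\sigma(\eta)$ must be pushed down far enough for $(ii)$, yet $\eta\inv$ --- and hence $|C|$ --- must remain polynomial in $c\epsilon\inv$, so one needs the bound in stabilizer rigidity to depend polynomially on the complexity of $\sigma$. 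Making this quantitative is precisely the effective content of Corollary \ref{cor:tech}, and it is what keeps every bound polynomial with $O_k(1)$ exponents --- in contrast to Theorem \ref{thm:extras}$(b)$, where the extra conjugation needed for normality degrades the bound to an iterated exponential.
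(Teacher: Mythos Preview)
Your argument is correct and structurally parallel to the paper's: both build $H$ as a stabilizer $\Stab^A_\eta(A)$ via the rigidity principle of Lemma~\ref{lem:tech} (applied with $X=AA\inv A$, so that $|X\inv X|\leq c^4|X|$ by Proposition~\ref{prop:PRI}; your phrase ``with $|A|$ in place of $|G|$'' implicitly uses $|X|\leq c|A|$ to pass between normalizations, and the bound on $\eta\inv$ then also depends on $c$), and both obtain the covering $A\seq CH$ via Haussler packing exactly as in Corollary~\ref{cor:VC}$(b)$.

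The genuine divergence is in part $(ii)$. The paper argues coset by coset: assuming some $gH$ has both $|gH\cap A|$ and $|gH\backslash A|$ at least $m\inv\eta|H|$, it shows $B_1=H\cap g\inv A$ is $k$-stable and hence $t$-generic in $H$ via Corollary~\ref{cor:VC}$(a)$, then finds $h\in H$ with $|B_1h\cap B_2|$ large enough to contradict $H=\Stab^A_{\eta^{4k}}(A)$. Your route is instead the averaging identity
\[
\mathbb{E}_{h\in H}|Ah\smd A|=\sum_{gH}\frac{2m_{gH}(|H|-m_{gH})}{|H|}\geq\sum_{gH}\min(m_{gH},|H|-m_{gH})=|A\smd DH|,
\]
which is valid (each summand on the left is exactly $\mathbb{E}_{h\in H}|(A\cap gH)h\smd(A\cap gH)|$, computed inside the coset $gH$) and bounds $|A\smd DH|\leq\sigma(\eta)|A|<\epsilon|H|$ directly. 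This avoids the second VC/genericity step and is more elementary; the trade-off is that the paper's argument yields the stronger per-coset conclusion that each $gH$ satisfies $|gH\cap A|<m\inv\eta|H|$ or $|gH\backslash A|<m\inv\eta|H|$ (cf.\ Remark~\ref{rem:mainproof}$(2)$), whereas your averaging gives only the aggregate bound required by the theorem as stated.
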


The assumption on $A$ in the previous result is qualitatively weaker than in Theorem \ref{thm:MPW} since $|AAA|\leq c|A|$ implies $|AA\inv A|\leq c^4|A|$ (by \cite[Theorem 5.1]{RuzIrr}) but there is no uniform converse implication (see \cite[Remark 2.2]{CoBogo}). Also, condition $(ii)$ strengthens the corresponding part of Theorem \ref{thm:MPW} (after scaling $\epsilon$) since $|H|\leq c|A|$.

\section{Preliminaries}

We use $\log$ and $\exp$ for base $2$ logarithm and exponentiation. Given a function $f\colon X\to X$, and an integer $n\geq 1$, $f^n$ denotes the $n$-fold composition of $f$.

\subsection{Stable relations}

Given sets $X$ and $Y$, a \textbf{(binary) relation} on $X\times Y$ is a subset $\phi\seq X\times Y$. Following model-theoretic notation, we say ``$\phi(a,b)$ holds" for a given $a\in X$ and $b\in Y$ (and often just write $\phi(a,b)$) if $(a,b)\in\phi$. Given some fixed $b\in Y$, we let $\phi(X,b)$ denote the set $\{a\in X:\phi(a,b)\}$. We will frequently identify  the set $\phi(X,b)$ with the unary relation $\phi(x,b)$ on $X$.

\begin{definition}
A relation $\phi(x,y)$ on $X\times Y$ is \textbf{$k$-stable} if there do not exist $a_1,\ldots,a_k\in X$ and $b_1,\ldots,b_k\in Y$ such that $\phi(a_i,b_j)$ holds if and only if $i\leq j$. We say that $\phi(x,y)$ is \textbf{stable} if it is $k$-stable for some $k\geq 1$.
\end{definition}

Recall that a subset $A$ of a group $G$ is \textbf{$k$-stable} if and only if the binary relation ``$xy\in A$" is $k$-stable. It is a standard fact in model theory that stable relations are closed under Boolean combinations. In the setting of groups, we illustrate this with the following fact, which partly quotes Lemmas 1 and 2 of \cite{TeWo}. Given integers $k,\ell\geq 2$, let $R(k,\ell)$ denote the usual two-color Ramsey number for graphs.

\begin{fact}\label{fact:BA}
Let $G$ be a group, and let $\cB$ be the collection of stable subsets of $G$. Then $\cB$ is a bi-invariant Boolean algebra. In particular:
\begin{enumerate}[$(i)$]
\item If $A\seq G$ is $k$-stable then $Ag$ and $gA$ are $k$-stable for any $g\in G$.
\item If $A\seq G$ is $k$-stable then $G\backslash A$ is $(k+1)$-stable.
\item If $A\seq G$ is $k$-stable  and $B\seq G$ is $\ell$-stable, with $k,\ell\geq 2$, then $A\cap B$ is $R(k,\ell)$-stable and $A\cup B$ is $(R(k,\ell)+1)$-stable.
\end{enumerate}
Moreover, a subset of $G$ is $1$-stable if and only if it is empty, and a nonempty subset of $G$ is $2$-stable if and only if it is a coset of a subgroup of $G$.
\end{fact}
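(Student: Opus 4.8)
Since a bi-invariant Boolean algebra is precisely a family of subsets of $G$ closed under left translation, right translation, complement, and pairwise intersection (and containing $\emptyset$ and $G$), the statement that $\cB$ is a bi-invariant Boolean algebra is exactly the conjunction of $(i)$, $(ii)$, $(iii)$ together with the trivial observations that $\emptyset$ is $1$-stable and $G$ is $2$-stable; so the plan is to prove $(i)$, $(ii)$, $(iii)$ and then the ``moreover'' clause, along the lines of \cite[Lemmas 1--2]{TeWo}. For $(i)$ the idea is simply to move a witnessing configuration by a translation: if $a_1,\dots,a_k,b_1,\dots,b_k$ witnessed that $Ag$ is not $k$-stable, then $a_ib_jg\inv\in A$ iff $i\leq j$, so $a_1,\dots,a_k,b_1g\inv,\dots,b_kg\inv$ would witness that $A$ is not $k$-stable; the case of $gA$ is symmetric, absorbing $g\inv$ into the left coordinates. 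For $(ii)$ the idea is to reverse the order: a length-$(k{+}1)$ configuration for $G\setminus A$ says $a_ib_j\in A$ iff $i>j$, and reindexing $i\mapsto k{+}2{-}i$, $j\mapsto k{+}2{-}j$ converts this into $a_ib_j\in A$ iff $i<j$; deleting the last left element and the first right element (and relabelling) then yields a length-$k$ configuration for $A$, a contradiction — the single extra index being exactly the cost of turning the strict inequality back into ``$\leq$''.

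For $(iii)$ I would invoke Ramsey's theorem. Consider the intersection first. Suppose $a_1,\dots,a_N,b_1,\dots,b_N$ witness that $A\cap B$ is not $R(k,\ell)$-stable, so $a_ib_j\in A\cap B$ iff $i\leq j$. For $i\leq j$ we get $a_ib_j\in A$ and $a_ib_j\in B$, while for $i>j$ at least one of these fails; so colour each pair $\{i,j\}$ with $i<j$ red if $a_jb_i\notin A$ and blue otherwise (whence $a_jb_i\notin B$). A red clique of size $k$ on indices $s_1<\dots<s_k$ then has $a_{s_p}b_{s_q}\in A$ iff $p\leq q$, a forbidden configuration for $A$; a blue clique of size $\ell$ gives one for $B$. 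Hence $A\cap B$ is $R(k,\ell)$-stable. The union is then treated either by the dual colouring argument, applied to the part of a configuration above the diagonal (after a pigeonhole step separating off the diagonal itself), or more cheaply via de Morgan's law $A\cup B=G\setminus((G\setminus A)\cap(G\setminus B))$ together with $(ii)$; squeezing out the precise constant $R(k,\ell)+1$ for the union is the one slightly delicate point of bookkeeping, and I would follow \cite[Lemma 1]{TeWo} there.

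For the ``moreover'' clause, $A$ is $1$-stable iff there is no pair $a,b$ with $ab\in A$, i.e.\ iff $A=\emptyset$. For the coset characterization, the substantive direction is that a nonempty $2$-stable $A$ is a coset. Unwinding $2$-stability, the forbidden configuration says: if $a_1b_1,a_1b_2,a_2b_2\in A$ then $a_2b_1\in A$. Given any $x,y,z\in A$, pick an arbitrary $a_1\in G$ and set $b_1=a_1\inv x$, $b_2=a_1\inv y$, $a_2=zy\inv a_1$; then $a_1b_1=x$, $a_1b_2=y$, $a_2b_2=z$, and $2$-stability forces $a_2b_1=zy\inv x\in A$. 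Thus $A$ is closed under the ternary operation $(x,y,z)\mapsto zy\inv x$. Fixing $a_0\in A$ and setting $H=a_0\inv A$, we get $e\in H$, and applying this identity to $x=a_0$, $y=a_0h_2$, $z=a_0h_1$ shows $a_0h_1h_2\inv\in A$, i.e.\ $h_1h_2\inv\in H$, for all $h_1,h_2\in H$; hence $H\leq G$ and $A=a_0H$. Conversely, if $A=gH$ with $H\leq G$, then from $a_1b_1,a_1b_2,a_2b_2\in gH$ we get $b_2\inv b_1=(a_1b_2)\inv(a_1b_1)\in H$, hence $a_2b_1=a_2b_2(b_2\inv b_1)\in gH$, so $gH$ has no length-$2$ configuration.

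I expect the main obstacle to be precisely this last equivalence: manufacturing an honest subgroup out of the purely combinatorial ``no order of length $2$'' hypothesis is the conceptual heart of the fact. In $(iii)$ there is no conceptual difficulty, but getting the Ramsey-theoretic bookkeeping to land on the stated constants — especially the interaction of the diagonal of a configuration with the two-colouring for the union — requires some care.
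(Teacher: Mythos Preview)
Your proposal is correct and follows the same approach the paper has in mind: parts $(i)$ and $(ii)$ by direct translation/reversal of a witnessing configuration, part $(iii)$ by the standard Ramsey colouring, and the coset characterization via the ternary closure $(x,y,z)\mapsto zy\inv x$. In fact the paper's own proof is only a two-line sketch (``Parts $(i)$ and $(ii)$ are easy, $(iii)$ follows from a routine Ramsey argument, the rest is an exercise'') with pointers to \cite{TeWo} and \cite{SandersSR}, so you have supplied considerably more detail than the paper does; your one remaining deferral---the precise constant $R(k,\ell)+1$ for the union---is likewise handled in the paper only by citation to \cite[Lemmas~1--2]{TeWo}.
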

\begin{proof}
Parts $(i)$ and $(ii)$ are easy, and $(iii)$ follows from a routine Ramsey argument.  We leave the rest as an exercise (see also  \cite[Example 1]{TeWo} and \cite[Lemma 1.1]{SandersSR}). 
\end{proof}

\subsection{VC-dimension}\label{sec:VC}

\begin{definition}
Let $X$ be a set. A \textbf{set system on $X$} is a collection $\cS\seq\cP(X)$ of subsets of $X$. Given $A\seq X$, we say that $\cS$ \textbf{shatters} $A$ if $\cP(A)=\{A\cap S:S\in\cS\}$. The \textbf{VC-dimension} of $\cS$ is $\VC(\cS)=\sup\{n\in\N:\text{$\cS$ shatters some $A\in {X\choose n}$}\}$.
\end{definition}

We will use the following tools from VC-theory.

 \begin{theorem}\label{thm:VC}\textnormal{\cite{HaussPL,KoPaWo,VCdim}}
 Suppose $X$ is a finite set, and $\cS$ is a set system on $X$ with $\VC(\cS)\leq d$. Fix $\epsilon>0$.
 \begin{enumerate}[$(a)$]
 \item \textnormal{($\epsilon$-net Theorem)} There is $F\seq X$, with  $|F|\leq 8d\epsilon^{\nv 2}$, such that if $S\in\cS$ and $|S|>\epsilon|X|$, then $F\cap S\neq\emptyset$.
 \item \textnormal{(Haussler's Packing Lemma)} Suppose $\cF\seq\cS$ is such that $|U\smd V|>\epsilon|X|$ for all distinct $U,V\in\cF$. Then $|\cF|\leq (30\epsilon\inv)^d$.
 \end{enumerate}
 \end{theorem}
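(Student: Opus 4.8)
The plan is to deduce both parts from the Sauer--Shelah lemma, the workhorse of VC-theory: the number of distinct traces $\{S\cap Y:S\in\cS\}$ cut out on an $m$-element set $Y$ by a class $\cS$ of VC-dimension at most $d$ is at most $\sum_{i=0}^d\binom mi\le(m+1)^d$. Both statements are classical --- as the theorem itself signals, one can simply quote \cite{VCdim,KoPaWo} for $(a)$ and \cite{HaussPL} for $(b)$ --- so below I only indicate the shape of the arguments and where the stated bounds come from.

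For $(a)$ I would run the Vapnik--Chervonenkis double-sampling argument. Fix a parameter $m$ and let $F$ be a uniformly random $m$-element subset of $X$; call $F$ \emph{bad} if some $S\in\cS$ with $|S|>\epsilon|X|$ avoids $F$. Introducing an independent second sample $F'$ of the same size, a Chernoff-type estimate shows that $\Pr[F\text{ bad}]$ is at most twice the probability that some such $S$ simultaneously avoids $F$ and meets $F'$ in at least $\frac12\epsilon m$ points; conditioning on the $2m$-point multiset $F\sqcup F'$, there are at most $(2m+1)^d$ distinct traces $S\cap(F\sqcup F')$ by Sauer--Shelah, and for each fixed trace the conditional probability of this event is at most $2^{\nv\epsilon m/2}$. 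Taking $m=O(d\epsilon\inv\log\epsilon\inv)$ forces the failure probability below $1$, so a good deterministic $F$ exists; since $\log\epsilon\inv\le\epsilon\inv$ for $\epsilon\le\frac12$, after optimizing constants this is $\le 8d\epsilon^{\nv 2}$.

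For $(b)$ I would use Haussler's probabilistic packing argument. Write $N=|X|$ and suppose $\cF$ is $\epsilon$-separated, i.e.\ $|U\smd V|>\epsilon N$ for distinct $U,V\in\cF$. Restrict $\cF$ to a uniformly random subset $Y\seq X$ of a suitably chosen size $m=\Theta(d\epsilon\inv)$. On one hand, $\cF|_Y$ still has VC-dimension at most $d$, so Sauer--Shelah together with the bound on the number of edges in the one-inclusion graph of a VC class (at most $d$ times the number of sets) limits how many pairwise-distant traces $\cF|_Y$ can contain; on the other hand, since the members of $\cF$ have pairwise symmetric difference $>\epsilon N$, an averaging argument over $Y$ shows that $F\mapsto F\cap Y$ is injective on $\cF$ and keeps the traces spread out with positive probability, so $|\cF|=|\cF|_Y|$ for a good $Y$. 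Balancing these effects bounds $|\cF|$ by $(c\epsilon\inv)^d$, and a careful accounting gives $c=30$.

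The main obstacle in both cases is not conceptual but the constant-chasing needed to hit the clean bounds $8d\epsilon^{\nv 2}$ and $(30\epsilon\inv)^d$ exactly --- in $(a)$ the one genuinely delicate point is obtaining the dependence on $d$ linearly rather than quadratically, which is precisely the refinement of \cite{KoPaWo}. For the rest of the paper the only feature that matters is that both bounds are \emph{polynomial} in $\epsilon\inv$ with the exponent controlled by the VC-dimension, since this is exactly what, fed through the results of the following sections, yields the $\epsilon^{\nv O_k(1)}$ index bound in Theorem \ref{thm:SAR}.
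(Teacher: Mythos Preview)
Your sketches are correct outlines of the classical arguments, but note that the paper does not prove this theorem at all: it is quoted as a black box from the cited references \cite{VCdim,KoPaWo,HaussPL}, with Remark \ref{rem:VCdim} the only commentary. That remark confirms your attribution --- the $8d\epsilon^{\nv 2}$ bound in $(a)$ is obtained by weakening the Koml\'os--Pach--Woeginger bound $8d\epsilon\inv\log\epsilon\inv$ via $\log\epsilon\inv\le\epsilon\inv$, and $(b)$ is Haussler's result --- and also mentions an alternative route to $(b)$ (via part $(a)$ and Sauer--Shelah, following \cite[Lemma 4.6]{LovSzeg}) yielding the looser bound $(80d)^d\epsilon^{\nv 20d}$, which is different from the one-inclusion-graph argument you sketch. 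Your closing observation is exactly right: for the rest of the paper only the polynomial shape of the bounds matters, and indeed the weaker $(80d)^d\epsilon^{\nv 20d}$ version of $(b)$ is what gets used in the proof of Theorem \ref{thm:SARm}.
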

 
 \begin{remark}\label{rem:VCdim}
 The bound in part $(a)$ is not optimal, but will suffice for our purposes. The original work of Vapnik and Chervonenkis \cite{VCdim} yields $|F|\leq O(d\epsilon^{\nv2}\log(d\epsilon\inv))$ in part $(a)$ (see Theorem \ref{thm:VCA}).  Komlos, Pach, and Woeginger \cite{KoPaWo} improve this to $8d\epsilon\inv\log\epsilon\inv$.  Part $(b)$ of Theorem \ref{thm:VC} is due to Haussler \cite{HaussPL}. This statement can also be deduced fairly quickly from part $(a)$ and the Sauer-Shelah Lemma, but with the worse bound $(80d)^d\epsilon^{\nv 20d}$ (see \cite[Lemma 4.6]{LovSzeg}).
 \end{remark}
 
Next, we apply Theorem \ref{thm:VC} to various set systems defined in groups.

\begin{definition}
Let $G$ be a group and fix a set $A\seq G$.
\begin{enumerate}
\item Define $\VC_\ell(A)=\VC(\{xA:x\in G\})$ and $\VC_r(A)=\VC(\{Ax:x\in G\})$. 
\item Given a set $X\seq G$ and a real number $t>0$, we say $A$ is \textbf{(right) $t$-generic in $X$} if $X\seq AF$ for some $F\seq G$ with $|F|\leq t$.
\item Assume $A$ is finite. Given $\epsilon>0$ and finite set $X\seq G$, define 
\[
\Stab_\epsilon^X\!(A)=\{x\in G:|Ax \smd A|\leq\epsilon|X|\}.
\]
If $G$ is finite then we set $\Stab_\epsilon(A)=\Stab^G_\epsilon(A)$.
\end{enumerate}
\end{definition}

\begin{corollary}\label{cor:VC}
Let $G$ be a group, and fix finite sets $A,X\seq G$.
\begin{enumerate}[$(a)$]
\item If $\VC_\ell(A)\leq d$ and $|A|>\epsilon |X\inv A|$, then $A$ is $8d\epsilon^{\nv 2}$-generic in $X$.
\item If $\VC_r(A)\leq d$ and $|AX|\leq c|X|$, then $\Stab^{X}_\epsilon\!(A)$ is $(30c\epsilon\inv)^d$-generic in $X$.
\end{enumerate}
\end{corollary}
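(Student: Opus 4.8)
The plan is to deduce both statements directly from Theorem \ref{thm:VC}, applied to the set systems of left- or right-translates of $A$ restricted to a suitable finite ground set. Throughout, I would use that restricting a set system to a subset of its ground set does not increase the VC-dimension, together with the elementary identity $|Ug\smd Vg|=|U\smd V|$ for right multiplication by a group element.

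For part $(a)$, take the ground set $Y:=X\inv A$ and the system $\cS:=\{zA\cap Y:z\in G\}$, so $\VC(\cS)\le\VC_\ell(A)\le d$. For each $x\in X$ the translate $x\inv A$ is contained in $Y$ and has size $|A|>\epsilon|Y|$, so the $\epsilon$-net theorem yields $F'\seq Y$ with $|F'|\le 8d\epsilon^{\nv 2}$ meeting every $x\inv A$ (for $x\in X$). Choosing $f\in F'\cap x\inv A$ gives $xf\in A$, i.e. $x\in Af\inv$, hence $X\seq AF$ with $F:=(F')\inv$. This is exactly the statement that $A$ is $8d\epsilon^{\nv 2}$-generic in $X$.

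For part $(b)$, I would instead use Haussler's Packing Lemma via a greedy maximality argument. Take the ground set $AX$, so $|AX|\le c|X|$, and the system $\{Af:f\in X\}$, of VC-dimension $\le\VC_r(A)\le d$. Choose $F\seq X$ maximal subject to $|Af\smd Af'|>\epsilon|X|$ for all distinct $f,f'\in F$. Since $\epsilon|X|\ge(\epsilon/c)|AX|$, Haussler's Packing Lemma bounds $|F|\le(30c\epsilon\inv)^d$. The key elementary computation is that right-translating by $x\inv$ gives $|Ax\smd Af|=|A(fx\inv)\smd A|$, and that $\Stab^X_\epsilon(A)$ is symmetric (both from the displayed identity); hence maximality of $F$ forces, for each $x\in X$, some $f\in F$ with $xf\inv\in\Stab^X_\epsilon(A)$ --- the case $x\in F$ being trivial since $e\in\Stab^X_\epsilon(A)$. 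Therefore $X\seq\Stab^X_\epsilon(A)\,F$, which is the assertion that $\Stab^X_\epsilon(A)$ is $(30c\epsilon\inv)^d$-generic in $X$.

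There is no serious obstacle here; the only thing to watch is the left/right bookkeeping --- making sure the relevant translates of $A$ actually sit inside the chosen finite ground set, so that the ambient-cardinality hypotheses of Theorem \ref{thm:VC} apply verbatim with the stated constants, and noting that in $(b)$ the separation is measured against $|AX|$ rather than $|X|$, which is precisely where the factor $c$ enters the final bound.
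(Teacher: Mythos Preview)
Your proposal is correct and follows essentially the same route as the paper's proof: part $(a)$ applies the $\epsilon$-net theorem to left translates of $A$ on the ground set $X\inv A$, and part $(b)$ runs the greedy maximality argument on right translates over $AX$, then invokes Haussler's Packing Lemma with the scaled separation $(\epsilon/c)|AX|$. The paper is terser (it does not spell out the symmetry of $\Stab^X_\epsilon(A)$ or the $x\in F$ case), but the ideas are identical.
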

\begin{proof}
For part $(a)$, apply Theorem \ref{thm:VC}$(a)$ to the set system $\cS=\{gA:g\in X\inv\}$ on $X\inv A$, which has VC-dimension at most $\VC_\ell(A)$. 

Part $(b)$ is shown by Alon, Fox, and Zhao \cite[Lemma 2.2]{AFZ} in the case when $G$ is abelian and $G=X$. The general case is similar. Consider the set system $\cS=\{Ag:g\in X\}$ on $AX$, and note that $\VC(\cS)\leq\VC_r(A)$. Suppose $\VC_r(A)\leq d$ and let $S=\Stab^{X}_\epsilon(A)$. Choose $F\seq X$ of maximal size such that $|Ag\smd Ah|>\epsilon|X|$ for all distinct $g,h\in F$. Then $|F|\leq (30c\epsilon\inv)^d$ by Theorem \ref{thm:VC}$(b)$. For any $g\in G$ there is some $h\in F$ such that $|Ag\smd Ah|\leq\epsilon|X|$, i.e., $gh\inv\in S$. So $X\seq SF$.
\end{proof}

Finally, we relate VC-dimension to stability. Suppose $\phi(x,y)$ is a relation on $X\times Y$. It is easy to check that if $\phi(x,y)$ is $k$-stable then the VC-dimension of the set system $\{\phi(X,b):b\in Y\}$ on $X$ is strictly less than $k$. Thus:

\begin{fact}\label{fact:VCstab}
If $G$ is a group and $A\seq G$ is $k$-stable, then $\VC_\ell(A)<k$ and $\VC_r(A)<k$.
\end{fact}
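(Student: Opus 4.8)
The final statement is Fact~\ref{fact:VCstab}, which asserts that $k$-stability of $A \seq G$ implies $\VC_\ell(A) < k$ and $\VC_r(A) < k$. The plan is to reduce this to the general claim about relations mentioned just before the statement: if $\phi(x,y)$ on $X \times Y$ is $k$-stable, then the set system $\{\phi(X,b) : b \in Y\}$ on $X$ has VC-dimension strictly less than $k$. Granting that claim, the fact follows immediately by applying it to the relation $\phi(x,y)$ given by ``$xy \in A$'' (for $\VC_\ell$, reading $\phi(X,b) = \{a : ab \in A\} = Ab\inv$ \dots more carefully: the set system $\{xA : x \in G\}$ arises from the relation ``$xy \in A$'' viewed with the \emph{first} coordinate ranging over the parameter; one should just pick the bookkeeping that lines things up, using that $A$ is $k$-stable iff ``$xy\in A$'' is $k$-stable and that $k$-stability is symmetric enough under swapping roles, cf.\ Fact~\ref{fact:BA}) — and symmetrically for $\VC_r$.

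So the real content is the reduction lemma. First I would prove its contrapositive: suppose the set system $\{\phi(X,b) : b \in Y\}$ shatters a set $\{a_1, \dots, a_k\} \in \binom{X}{k}$; I must produce a $k$-ladder for $\phi$, i.e.\ $a'_1, \dots, a'_k \in X$ and $b'_1, \dots, b'_k \in Y$ with $\phi(a'_i, b'_j) \iff i \le j$. Reorder so that $a_1, \dots, a_k$ is the shattered set. By shattering, for each $j \in \{1, \dots, k\}$ the ``up-set'' $\{a_i : i \le j\}$ is realized: there is $b'_j \in Y$ with $\{a_i : \phi(a_i, b'_j)\} = \{a_i : i \le j\}$, i.e.\ $\phi(a_i, b'_j)$ holds exactly when $i \le j$. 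Setting $a'_i = a_i$ then gives exactly the required ladder. Hence no such shattered $k$-set exists, so $\VC(\{\phi(X,b):b\in Y\}) \le k-1 < k$.

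The only subtle point — and the step I expect to be mildly fiddly rather than genuinely hard — is matching the two orientations of the relation ``$xy \in A$'' to the two set systems $\{xA : x \in G\}$ and $\{Ax : x \in G\}$ so that the ladder one extracts is an honest $k$-ladder for the \emph{same} relation whose $k$-stability is hypothesized. Concretely: for $\VC_r$, the set system is $\{Ax : x \in G\}$; note $a \in Ax \iff ax\inv \in A$, so these are the fibers $\psi(G, x)$ of the relation $\psi(a,x) := ax\inv \in A$, and a shattered $k$-set yields a $k$-ladder for $\psi$; one then checks (or invokes the elementary closure properties, since $x \mapsto x\inv$ is a bijection and $A$ is $k$-stable iff ``$xy \in A$'' is $k$-stable) that a $k$-ladder for $\psi$ produces a $k$-ladder for ``$xy\in A$'', contradicting $k$-stability of $A$. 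For $\VC_\ell$, the set system $\{xA : x \in G\}$ consists of fibers of $\chi(a,x) := x\inv a \in A$, and the same bookkeeping applies. Since everything is purely combinatorial — no Ramsey, no growth bounds — I would present this compactly, probably just stating the reduction lemma, giving the two-line ladder extraction above, and leaving the left/right bookkeeping as a one-sentence remark, exactly in the spirit of the paper's ``it is easy to check'' preceding the statement.
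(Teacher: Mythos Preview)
Your proposal is correct and follows precisely the approach the paper indicates: the paper does not give a proof of Fact~\ref{fact:VCstab} at all, but immediately before it states the general reduction (``if $\phi(x,y)$ is $k$-stable then $\VC(\{\phi(X,b):b\in Y\})<k$'') as something ``easy to check'' and then writes ``Thus:''. Your ladder extraction from a shattered $k$-set via the initial segments $\{a_1,\dots,a_j\}$ is exactly the standard verification of that claim, and your left/right bookkeeping is the routine unpacking the paper omits.
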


\section{Proof of Theorem \ref{thm:SAR}}\label{sec:proof}

In the proof of Theorem \ref{thm:SAR}, the stability assumption will be leveraged against an auxiliary relation  used in the definition of stabilizers. Before examining this relation, we first prove the main technical lemma in a  more general setting.

\begin{definition}
Let $G$ be a group and suppose $\phi(x,y)$ is a relation on $G\times Y$ for some set $Y$. Then $\phi(x,y)$ is \textbf{right-invariant} if, for any $b\in Y$ and $g\in G$, there is $c\in Y$ such that $\phi(G,b)g=\phi(G,c)$.
\end{definition}

\begin{remark}\label{rem:stabinv}
If $\phi(x,y)$ is a $k$-stable right-invariant relation on $G\times Y$ then, for any $b\in Y$, $\phi(G;b)$ is a $k$-stable subset of $G$.
\end{remark}

We now prove the main technical lemma in the paper.  This result also represents the only direct use of stability in the proof of Theorem \ref{thm:SAR} (see Remark \ref{rem:mainproof}$(3)$).

\begin{definition}
Given a function $\sigma\colon (0,1)\to (0,1)$, an integer $k\geq 2$, and a real number $r\geq 1$, define $\sigma_{k,r}\colon (0,1)\to (0,1)$ so that $\sigma_{k,r}(x)=x\sigma(\frac{1}{2}x)^2/(8(k-1)r^2)$.
\end{definition}

\begin{lemma}\label{lem:tech}
Let $G$ be a group and suppose $\phi(x,y)$ is a $k$-stable right-invariant relation on $G\times Y$ for some set $Y$ and some $k\geq 2$. Fix a  nonempty finite set $X\seq G$ and suppose $|X\inv X|\leq r|X|$. Then, for any increasing function $\sigma\colon (0,1)\to (0,1)$ and any $\epsilon>0$, there is some $\eta\in [(\sigma_{k,r})^k(\epsilon),\epsilon]$ such that, for any $b\in Y$, if $\phi(G,b)\seq X$ and $|\phi(G,b)|\leq \eta|X|$ then $|\phi(G,b)|\leq \sigma(\eta)|X|$.
\end{lemma}

Before starting the proof, we remark that Theorem \ref{thm:SAR} will only require the special case where $G$ is finite and $X=G$ (so  we may take $r=1$).

\begin{proof}
Suppose the result fails for some function $\sigma$ and some $\epsilon>0$. Set $\tau=\sigma_{k,r}$ and $\delta=\tau^k(\epsilon)$.  Let $Y_X=\{b\in Y:\phi(G,b)\seq X\}$.  Given a finite set $A\seq G$, let $\mu(A)=|A|/|X|$. Then we have:
\begin{equation*}
\text{For any $\eta\in [\delta,\epsilon]$, there is some $b\in Y_X$ such that $\sigma(\eta)<\mu(\phi(x;b))\leq\eta$.}\tag{$\dagger$}
\end{equation*}

Given $n\geq 1$ and $1\leq t\leq n$, define the following relation on $G\times Y^n$:
\[
\textstyle\phi^n_t(x;y_1,\ldots,y_n)=\bigwedge_{i=1}^t\phi(x,y_i)\wedge\bigwedge_{i=t+1}^n\neg\phi(x,y_i).
\]
By induction on $1\leq n\leq k$, we will construct $b_1,\ldots,b_n\in Y$ such that $b_1\in Y_X$ and, for all $1\leq t\leq n$, $\mu(\phi^n_t(x;b_1,\ldots,b_t))> \tau^n(\epsilon)$.  Given this, we can then choose $a_1,\ldots,a_{k}\in G$ such that $\phi^{k}_t(a_t;b_1,\ldots,b_{k})$ holds for all $1\leq t\leq k$, which yields $\phi(a_i,b_j)$ if and only if $i\geq j$. Setting $a^*_i=a_{k-i+1}$ and $b^*_j=b_{k-j+1}$, one then obtains $\phi(a^*_i,b^*_j)$ if and only if $i\leq j$, contradicting $k$-stability of $\phi(x,y)$. 

For the base case $n=1$, apply $(\dagger)$ with $\eta=\epsilon$ to find $b_1\in Y_X$ such that $\mu(\phi(x,b_1))> \sigma(\epsilon)$. Since $\sigma(\epsilon)>\tau(\epsilon)$ and $\phi^1_1=\phi$, we are done.  

Now fix $1\leq n<k$ and suppose we have $b_1,\ldots,b_n\in Y$ satisfying the desired conditions. Set $d=k-1$ and $\eta=\frac{1}{2}\tau^n(\epsilon)$, and note that $\eta\in[\delta,\epsilon]$. By $(\dagger)$, there is some $c\in Y_X$ such that $\sigma(\eta)<\mu(\phi(x,c))\leq\eta$. Let  $C:=\phi(G,c)$. Then $C$ is $k$-stable as a subset of $G$ (by Remark \ref{rem:stabinv}) and $|C|>\sigma(\eta)|X|\geq r\inv\sigma(\eta)|X\inv C|$. So  $C$ is $8dr^2\sigma(\eta)^{\nv 2}$-generic in $X$ by Corollary \ref{cor:VC}$(a)$ and Fact \ref{fact:VCstab}. Let $B=\phi^n_n(G;b_1,\ldots,b_n)$. Then $B\seq X$ and $|B|> \tau^n(\epsilon)|X|$ by induction. By averaging, we can find some $g\in G$ such that $|B\cap Cg|> (\tau^n(\epsilon)\sigma(\eta)^2/8dr^2)|X|$. 

Since $\phi$ is right-invariant, there is some $b_{n+1}\in Y$ such that $\phi(G,b_{n+1})=Cg$. So we have
\begin{multline*}
\mu(\phi^{n+1}_{n+1}(x;b_1,\ldots,b_{n+1}))=\mu(\phi^n_n(x;b_1,\ldots,b_n)\wedge\phi(x,b_{n+1}))\\
> \tau^n(\epsilon)\sigma(\eta)^2/8dr^2=\tau^{n+1}(\epsilon). 
\end{multline*} 
Now fix $1\leq t\leq n$. Then
\[
\phi^{n+1}_t(x;b_1,\ldots,b_{n+1})=\phi^n_t(x;b_1,\ldots,b_n)\wedge\neg\phi(x,b_{n+1}).
\]
Since $\mu(\phi^n_t(x;b_1,\ldots,b_n))> \tau^n(\epsilon)$ by induction, and
\[
\textstyle\mu(\phi(x,b_{n+1}))=\mu(\phi(x,c))\leq \eta=\frac{1}{2}\tau^n(\epsilon),
\]
it follows that $\mu(\phi^{n+1}_t(x;b_1,\ldots,b_{n+1}))>\frac{1}{2}\tau^n(\epsilon)>\tau^{n+1}(\epsilon)$. Altogether, $b_1,\ldots,b_{n+1}$ satisfy the desired properties.
\end{proof}

\begin{definition}
Given a group $G$ and a set $A\seq G$, let $\phi_A(x;y,z)$ be the relation on $G\times G^2$ defined by $x\in Ay\smd Az$.
\end{definition}

\begin{corollary}\label{cor:tech}
Let $G$ be a finite group and suppose $A\seq G$ is such that $\phi_A(x;y,z)$ is $k$-stable for some $k\geq 2$. Then, for any increasing function $\sigma\colon (0,1)\to (0,1)$ and any $\epsilon>0$, there is some $\eta\in [(\sigma_{k,1})^k(\epsilon),\epsilon]$ such that $\Stab_\eta(A)\seq \Stab_{\sigma(\eta)}(A)$.
\end{corollary}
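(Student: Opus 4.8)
The plan is to deduce this directly from Lemma \ref{lem:tech}, applied to the relation $\phi_A(x;y,z)$ on $G\times G^2$ with $X=G$. First I would check the hypotheses of the lemma. Since $X=G$ is a finite group, $|X\inv X|=|G|=|X|$, so we may take $r=1$; and $\phi_A(G,b)\seq G=X$ holds trivially for every $b\in G^2$. It remains to verify that $\phi_A$ is right-invariant: for $b=(y,z)\in G^2$ and $g\in G$ we have
\[
\phi_A(G,(y,z))\,g=(Ay\smd Az)g=Ayg\smd Azg=\phi_A(G,(yg,zg)),
\]
so $c=(yg,zg)$ witnesses right-invariance. By hypothesis $\phi_A$ is $k$-stable with $k\geq 2$, so Lemma \ref{lem:tech} applies.

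Next, given an increasing function $\sigma\colon(0,1)\to(0,1)$ and $\epsilon>0$, Lemma \ref{lem:tech} (with $r=1$) produces $\eta\in[(\sigma_{k,1})^k(\epsilon),\epsilon]$ such that, for every $b\in G^2$, if $|\phi_A(G,b)|\leq\eta|G|$ then $|\phi_A(G,b)|\leq\sigma(\eta)|G|$. To finish, I would translate this into the stabilizer statement. Fix $x\in\Stab_\eta(A)$, so $|Ax\smd A|\leq\eta|G|$. Taking $b=(1,x)$ gives $\phi_A(G,b)=A\smd Ax$, hence $|\phi_A(G,b)|\leq\eta|G|$, and the conclusion of the lemma yields $|A\smd Ax|\leq\sigma(\eta)|G|$, i.e.\ $x\in\Stab_{\sigma(\eta)}(A)$. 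Therefore $\Stab_\eta(A)\seq\Stab_{\sigma(\eta)}(A)$, as required.

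There is essentially no obstacle here beyond bookkeeping, since all the content is in Lemma \ref{lem:tech}. The only two points requiring a little care are: (a) noting that the specialization $X=G$ forces $r=1$, which is why the interval in the corollary involves $(\sigma_{k,1})^k$; and (b) matching the symmetric-difference condition $|Ax\smd A|\leq\eta|G|$ defining $\Stab_\eta(A)$ with an instance of the relation $\phi_A$, which is handled by taking $b=(1,x)$ together with the symmetry of $\smd$.
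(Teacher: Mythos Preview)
Your proposal is correct and follows precisely the same approach as the paper: verify that $\phi_A$ is right-invariant and apply Lemma \ref{lem:tech} with $X=G$ (hence $r=1$). The paper's proof is only two sentences and omits the bookkeeping you provide, so your version simply unpacks the details of the same argument.
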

\begin{proof}
Note that $\phi_A(x;y,z)$ is right-invariant. So we can apply Lemma \ref{lem:tech} with $X=G$, which yields the desired result. 
\end{proof}

\begin{proposition}\label{prop:smd}
Suppose $G$ is a group and $A\seq G$ is $k$-stable, with $k\geq 2$. Then $\phi_A(x;y,z)$ is $n$-stable, where $n=R\big(R(k,k+1),R(k,k+1)\big)+1$.
\end{proposition}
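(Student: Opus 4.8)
The plan is to express $\phi_A(x;y,z)$, which defines the set $Ay \smd Az = (Ay \setminus Az) \cup (Az \setminus Ay)$, as a Boolean combination of the relations $\psi_1(x;y) : x \in Ay$ and $\psi_2(x;z) : x \in Az$, and then track the stability index through the closure properties of stable relations recorded in Fact \ref{fact:BA}. Concretely, $Ay \smd Az = (Ay \cap (G \setminus Az)) \cup ((G \setminus Ay) \cap Az)$ as subsets of $G$, uniformly in the parameters $(y,z)$, so $\phi_A$ is built from $\psi_1$ and $\psi_2$ by the operations: complement, intersection, and union.

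First I would observe that since $A$ is $k$-stable, the relation $x \in Ay$ is $k$-stable (this is essentially Fact \ref{fact:BA}$(i)$ in relational form: for fixed $y$, $Ay$ is a $k$-stable subset of $G$, and the definition of $k$-stability of a relation only quantifies over finitely many parameters, so it transfers), and likewise $x \in Az$ is $k$-stable. Then I would apply Fact \ref{fact:BA}$(ii)$ to each: $x \in G \setminus Ay$ and $x \in G \setminus Az$ are $(k+1)$-stable. Next, by Fact \ref{fact:BA}$(iii)$, $x \in Ay \cap (G\setminus Az)$ is $R(k, k+1)$-stable, and symmetrically $x \in (G\setminus Ay) \cap Az$ is $R(k, k+1)$-stable. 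Finally, applying the union clause of Fact \ref{fact:BA}$(iii)$ with both parameters equal to $R(k,k+1)$ gives that $\phi_A(x;y,z)$, being the union of these two, is $\big(R(R(k,k+1), R(k,k+1)) + 1\big)$-stable, which is exactly the claimed $n$. One small bookkeeping point to check is that $R(k,k+1) \geq 2$ so that Fact \ref{fact:BA}$(iii)$ applies — this holds since $k \geq 2$.

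The one genuine subtlety, and the place I would be most careful, is that Fact \ref{fact:BA} is stated for subsets of $G$ (i.e. unary relations), whereas here I need the conclusions for the binary relation $\phi_A$ viewed as a relation on $G \times G^2$ with parameter variables $(y,z)$. The bridge is that $k$-stability of a relation $\theta(x; \bar w)$ is witnessed by a $k \times k$ "ladder" of elements, and a Boolean operation performed uniformly on the fibers $\theta(G; \bar w)$ — such as $\phi_A(G;y,z) = \psi_1(G;y) \smd \psi_2(G;z)$ — has the property that any ladder for $\phi_A$ restricts (fixing the $y$- and $z$-components of the parameters appropriately) to configurations in the component relations. So the Ramsey arguments proving Fact \ref{fact:BA}$(ii)$ and $(iii)$ go through verbatim at the level of relations; I would either state this as a routine observation or cite that stable relations form a Boolean algebra (closed under Boolean combinations of the defining formulas), which is the standard model-theoretic fact already invoked in the discussion preceding Fact \ref{fact:BA}. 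With that in hand, the index computation above is immediate and the proof is complete.
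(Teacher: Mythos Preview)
Your proposal is correct and follows essentially the same approach as the paper: the paper also writes $\phi_A(x;y,z)$ as $(\psi(x,y)\wedge\neg\psi(x,z))\vee(\psi(x,z)\wedge\neg\psi(x,y))$ for $\psi(x,y)$ the relation $x\in Ay$, and then appeals to ``the same Ramsey argument underlying Fact~\ref{fact:BA}.'' You have simply made that appeal explicit by tracking the stability index through the complement, intersection, and union bounds, and you correctly flag the point that Fact~\ref{fact:BA} is stated for subsets rather than relations, which is exactly why the paper invokes the underlying Ramsey argument rather than the fact itself.
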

\begin{proof}
If $\psi(x,y)$ denotes the relation $x\in Ay$ on $G\times G$, then $\psi(x,y)$ is $k$-stable and $\phi_A(x;y,z)$ is equivalent to $(\psi(x,y)\wedge\neg\psi(x,z))\vee(\psi(x,z)\wedge\neg\psi(x,y))$. So the result follows from the same Ramsey argument underlying Fact \ref{fact:BA}. 
\end{proof}

By the previous proposition, we can make the following definition. 

\begin{definition}
Given an integer $k\geq 2$, define $k_*$ to be the least integer $n\geq 1$ such that $\phi_A(x;y,z)$ is $n$-stable for any group $G$ and any $k$-stable set $A\seq G$.
\end{definition}

We now prove the main result.

\begin{proof}[\textnormal{\textbf{Proof of Theorem \ref{thm:SAR}}}]
As noted in Fact \ref{fact:BA}, the situation is trivial for $k=1$. So
fix $k\geq 2$ and $\epsilon>0$. Without loss of generality, assume $\epsilon< (8(k-1)30^{4k-4})\inv$. Let $\sigma(x)=x^{4k}$ and set $\delta=(\sigma_{k_*,1})^{k_*}(\epsilon)$. Then $\delta\geq \epsilon^{O_k(1)}$ (see also Remark \ref{rem:mainproof}$(1)$).

 Now fix a finite group $G$ and a $k$-stable set $A\seq G$. By Corollary \ref{cor:tech}, there is some $\eta\in[\delta,\epsilon]$ such that $\Stab_\eta(A)=\Stab_{\eta^{4k}}(A)$. Note  that $2\eta^{4k}\leq\eta$. Therefore, if we set $H=\Stab_\eta(A)$, then $H=H\inv$ and $HH\seq \Stab_{2\eta^{4k}}(A)\seq H$. So $H$ is a subgroup of $G$. By Corollary \ref{cor:VC}$(b)$, $H$ has index $m$ where 
 \[
 m\leq (30\eta\inv)^{k-1}\leq (30\delta\inv)^{k-1}\leq \epsilon^{\nv O_k(1)}.
 \]

\noindent\emph{Claim:} If $g\in G$ then either $|gH\cap A|<m\inv\eta|H|$ or $|gH\backslash A|<m\inv\eta|H|$.

\noindent\emph{Proof:} 
Fix $g\in G$ and, toward a contradiction, suppose we have $|gH\cap A|\geq m\inv\eta|H|$ and $|gH\backslash A|\geq m\inv\eta|H|$. Let $B=H\cap g\inv A$ and $C=H\backslash g\inv A$. Then $B,C\seq H$ and $|B|,|C|\geq m\inv\eta|H|$. Note that $B$ is $k$-stable by Fact \ref{fact:BA} (and since $R(k,2)=k$). So by Corollary \ref{cor:VC}$(a)$ and Fact \ref{fact:VCstab}, $B$ is $t$-generic in $H$, where $t= 8(k-1)(m\eta\inv)^2$. 
By averaging, we can find some $h\in G$ such that 
 \[
 |Bh\cap C|\geq \frac{1}{t}|C|\geq  \frac{\eta}{mt}|H|=\frac{\eta}{m^2t}|G|\geq \frac{\eta^{4k-1}}{8(k-1)30^{4k-4}}|G|>\eta^{4k}|G|.
 \] 
 Note that $Bh\cap C=H\cap Hh\cap g\inv (Ah\backslash A)$, and so $h\in H$ and $|Ah\backslash A|>\eta^{4k}|G|$. But this is a contradiction, since $H=\Stab_{\eta^{4k}}(A)$ and $Ah\backslash A\seq Ah\smd A$. \claim
\medskip

Now let $C_1,\ldots,C_m$ enumerate the left cosets of $H$ in $G$. Let $I$ be the set of $i\leq m$ such that $|C_i\cap A|\geq m\inv\eta|H|$. Set $D=\bigcup_{i\in I}C_i$. Then $D\backslash A\seq \bigcup_{i\in I}C_i\backslash A$ and $A\backslash D\seq \bigcup_{i\not\in I} C_i\cap A$.  Note that if $i\in I$ then $|C_i\backslash A|<m\inv\eta|H|$ by the claim; and if $i\not\in I$ then $|C_i\cap A|<m\inv\eta|H|$. So we have 
\[
|A\smd D|\leq \sum_{i\in I}|C_i\backslash A|+\sum_{i\not\in I}|C_i\cap A|< \sum_{i=1}^mm\inv\eta|H|=\eta|H|\leq\epsilon|H|.\qedhere
\]
\end{proof}

\begin{remark}\label{rem:mainproof}
We make some comments on the proof of Theorem \ref{thm:SAR}.
\begin{enumerate}[$(1)$]
\item If $\sigma(x)=x^{4k}$ and $\epsilon>0$, then $(\sigma_{k_*,1})^{k_*}(\epsilon)= c\epsilon^{(8k+1)^{k_*}}$ for some $c=c(k)$. So by Proposition \ref{prop:smd}, and the rough bound $R(k,\ell)\leq\exp(k+\ell-2)$, we have $m\leq O_k(\epsilon^{\nv N_k})$ where $N_k=k^{\exp^2(2k)}$.
\item The set $D$ approximates $A$ up to an ``error set" of size at most $\epsilon|H|$. But the proof further shows this error set is evenly distributed in the cosets of $H$. In particular, for all $g\in G$, we have $|gH\cap A|<m\inv \epsilon|H|$ or $|gH\backslash A|<m\inv \epsilon|H|$.

\item The proof only requires a fixed bound on $\VC_\ell(A)$, $\VC_r(A)$, and the stability of $\phi_A(x;y,z)$. Although we do not have an example on hand to discern whether this is weaker than stability of $A$ as a set, it can easily be seen that some level of stability is needed for Theorem \ref{thm:SAR}. For example, let $G=\Z/p\Z$ and let $A=\{0,1,\ldots,\frac{p-1}{2}\}$, where $p>2$ is prime. Then $\VC_\ell(A)=2$, but $A$ cannot be approximated by a subgroup of $G$ whose index is independent of $p$. 
\end{enumerate}
\end{remark}

\section{Normality, definability, and tripling}

\subsection{Normality}\label{sec:norm}

In contrast to Theorem \ref{thm:CPT}, we do not necessarily know that the subgroup given by Theorem \ref{thm:SAR} is \emph{normal}. While normality is naturally motivated as a group-theoretic property, it is also crucial when using cosets to obtain a regular partition of the ``Cayley product graph" associated to stable subsets of finite groups (see \cite[Corollary 3.5]{CPT}).
 In order to obtain a normal subgroup in this situation, we must introduce an exponential tower of bounded height.

\begin{theorem}\label{thm:SARn}
Suppose $G$ is a finite group and $A\seq G$ is $k$-stable. Then for any $\epsilon>0$, there is a normal subgroup $H\leq G$ of index $\exp^{O_k(1)}(\epsilon\inv)$, and a set $D\seq G$ which is a union of cosets of $H$, such that $|A\smd D|<\epsilon|H|$.
\end{theorem}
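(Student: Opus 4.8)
The plan is to take the normal subgroup to be the normal core $H=\bigcap_{g\in G}gH_0g\inv$ of a stabilizer $H_0=\Stab_\eta(A)$ produced just as in the proof of Theorem~\ref{thm:SAR}, but feeding Corollary~\ref{cor:tech} a much more aggressively decreasing auxiliary function $\sigma$. Passing to the normal core costs us: from the conjugation action $G\to\mathrm{Sym}(G/H_0)$ we only get $[G:H]\le [G:H_0]!$, a factorial blow-up of the index, so to still approximate $A$ by cosets of $H$ with error below $\epsilon|H|$ we must force $H$ — hence already $H_0$ — to almost stabilize $A$ with error small enough to absorb that factorial. Since Corollary~\ref{cor:VC}$(b)$ and Fact~\ref{fact:VCstab} give $[G:\Stab_x(A)]\le (30x\inv)^{k-1}$, I would choose $\sigma(x)=x\big/\big(2\,\lceil(30x\inv)^{k-1}\rceil!\big)$ (assuming $k\ge 2$; the case $k=1$ is trivial). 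This $\sigma$ is increasing, maps $(0,1)$ into $(0,1)$, satisfies $2\sigma(x)<x$, and obeys the key identity $\sigma(x)\cdot\lceil(30x\inv)^{k-1}\rceil!=x/2$ — it is designed precisely to pre-pay for the normal core.

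Now apply Corollary~\ref{cor:tech} with this $\sigma$ (legitimate since $\phi_A(x;y,z)$ is $k_*$-stable), obtaining $\eta\in[(\sigma_{k_*,1})^{k_*}(\epsilon),\epsilon]$ with $\Stab_\eta(A)\seq\Stab_{\sigma(\eta)}(A)$. As $2\sigma(\eta)<\eta$, the triangle inequality for $x\mapsto|Ax\smd A|$ shows, exactly as in the proof of Theorem~\ref{thm:SAR}, that $H_0:=\Stab_\eta(A)$ is a subgroup ($H_0H_0\seq\Stab_{2\sigma(\eta)}(A)\seq\Stab_\eta(A)=H_0$); by Corollary~\ref{cor:VC}$(b)$ with $X=G$, $[G:H_0]\le(30\eta\inv)^{k-1}$. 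Let $H:=\bigcap_{g\in G}gH_0g\inv$, which (as $gH_0g\inv$ depends only on the coset $gH_0$) is an intersection of at most $[G:H_0]$ conjugates; then $H\trianglelefteq G$, $H\seq H_0\seq\Stab_{\sigma(\eta)}(A)$, and $[G:H]\le[G:H_0]!\le\lceil(30\eta\inv)^{k-1}\rceil!$. For the approximation I would use the identity
\[
\sum_{h\in H}|Ah\smd A|=2\sum_{C\in G/H}|C\cap A|\,|C\smd(C\cap A)|,
\]
valid for any subgroup $H\le G$ (double-count pairs $(x,h)$ with $h\in H$ and exactly one of $x,xh\inv$ in $A$). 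Since $H\seq\Stab_{\sigma(\eta)}(A)$, the left side is at most $|H|\sigma(\eta)|G|$; taking $D$ to be the union of those cosets $C$ of $H$ with $|C\cap A|\ge|C\smd(C\cap A)|$ (a union of cosets of the normal subgroup $H$) and using $\min(|C\cap A|,|C\setminus A|)\le 2|C\cap A|\,|C\setminus A|/|C|$ on each coset gives $|A\smd D|\le\sigma(\eta)|G|$. Finally $\sigma(\eta)|G|=\sigma(\eta)[G:H]\,|H|\le\sigma(\eta)\lceil(30\eta\inv)^{k-1}\rceil!\,|H|=\tfrac{\eta}{2}|H|<\epsilon|H|$, so $H$ and $D$ are as required.

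The routine verifications are the inequalities $\sigma(\eta)<\eta$, $2\sigma(\eta)<\eta$ used above, and the elementary coset computation; the one genuinely delicate point — and the source of the iterated exponential — is estimating $\delta\inv$ where $\delta=(\sigma_{k_*,1})^{k_*}(\epsilon)$, since $[G:H]\le\lceil(30\delta\inv)^{k-1}\rceil!$. Because $\sigma$, and hence $\sigma_{k_*,1}(x)=x\sigma(x/2)^2/(8(k_*-1))$, contracts essentially by a reciprocal factorial of a power of $x\inv$, each of the $k_*$ iterations raises the height of an exponential tower by one; as $k_*=k_*(k)=O_k(1)$, this yields $\delta\inv\le\exp^{O_k(1)}(\epsilon\inv)$ and therefore $[G:H]\le\exp^{O_k(1)}(\epsilon\inv)$ after the extra factorial from the normal core. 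I expect this bookkeeping, rather than any conceptual issue, to be the main obstacle: the whole cost of normality is concentrated in the fact that $\sigma$ had to be over-shrunk to absorb the normal core, which is exactly why the polynomial bound of Theorem~\ref{thm:SAR} degrades to a bounded tower here.
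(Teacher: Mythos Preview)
Your proof is correct, and the overall architecture matches the paper's: apply Corollary~\ref{cor:tech} with an aggressively shrinking $\sigma$, take the normal core of the resulting stabilizer subgroup, and verify the bound on $\delta^{-1}$ by tracking the tower growth through $k_*$ iterations of $\sigma_{k_*,1}$.

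Where you genuinely diverge is in the approximation step. The paper keeps $\sigma(x)=\exp(-x^{-k})$ and re-runs the genericity ``Claim'' from the proof of Theorem~\ref{thm:SAR}: assuming a bad coset $gH$, it builds $B=H\cap g^{-1}A$ and $C=H\backslash g^{-1}A$, uses Corollary~\ref{cor:VC}$(a)$ to find $h\in H$ with $|Bh\cap C|>\sigma(\eta)|G|$, and contradicts $H\subseteq\Stab_{\sigma(\eta)}(A)$. This argument involves $m^4$ (with $m=[G:H]$) in the denominator, which is why the paper needs $\sigma$ small enough to beat a fourth power of a factorial. You instead bypass the genericity claim entirely via the double-counting identity $\sum_{h\in H}|Ah\smd A|=2\sum_{C}|C\cap A|\,|C\backslash A|$ combined with $\min(a,b)\le 2ab/(a+b)$, obtaining $|A\smd D|\le\sigma(\eta)|G|$ directly; this only costs a single factor of $m$, so your tailor-made $\sigma(x)=x/(2\lceil(30x^{-1})^{k-1}\rceil!)$ suffices. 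Your route is more elementary and avoids a second appeal to VC-theory, but the paper's route yields the stronger per-coset regularity statement (each left coset of $H$ is almost contained in or almost disjoint from $A$, as in Remark~\ref{rem:mainproof}$(2)$), which your averaging argument does not immediately give.
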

\begin{proof}
We just explain how to modify the proof of Theorem \ref{thm:SAR} (from Section \ref{sec:proof}). Fix $k\geq 2$ and $\epsilon>0$, and assume $\epsilon$ is sufficiently small (depending only on $k$) so that if $\eta\leq\epsilon$ then $(\dagger)$ below is satisfied.

 Let $\sigma(x)=\exp(\nv x^{\nv k})$ and set $\delta=(\sigma_{k_*,1})^{k_*}(\epsilon)$.  Apply Corollary \ref{cor:tech} to obtain $\eta\in[\delta,\epsilon]$ with $H_0=\Stab_\eta(A)=\Stab_{\sigma(\eta)}(A)$.  As before, $H_0$ is a subgroup of $G$ of index $m_0\leq (30\eta\inv)^{k-1}$. So $H:=\bigcap_{g\in G}gH_0g\inv$ is a \emph{normal} subgroup of $G$ of index  $m\leq m_0!\leq \exp(m_0\log m_0)\leq\exp^{O_k(1)}(\epsilon\inv)$.
Now we follow the same claim as in the proof of Theorem \ref{thm:SAR}. In particular, we obtain $B,C\seq H$ and $h\in H$ such that
\[
|Bh\cap C|>\frac{\eta}{m^2t}|G|\geq \frac{\eta^3}{8(k-1)\exp[4(30\eta\inv)^{k-1}\log((30\eta\inv)^{k-1})]}|G|,
\]
where $t=8(k-1)(m\eta\inv)^2$. So, assuming
\begin{equation*}
\exp[\eta^{\nv k}-4(30\eta\inv)^{k-1}\log((30\eta\inv)^{k-1})]>8(k-1)\eta^{\nv 3},\tag{$\dagger$}
\]
we have $Bh\cap C>\sigma(\eta)|G|$. Since $h\in H\leq\Stab_{\sigma(\eta)}(A)$, this leads to a similar contradiction. 
\end{proof}

The iterated exponential bound in the previous theorem is of course much worse than the polynomial bound  in Theorem \ref{thm:SAR}. However, viewing $k$ as fixed, it is still much better than the bound in the general arithmetic regularity lemma for finite abelian groups from \cite{GreenSLAG}, which lies between $\exp^{\Omega(\epsilon\inv)}(1)$ and $\exp^{\epsilon^{\nv 3}}(1)$ (see \cite{HLMS}). In light of Theorem \ref{thm:TW}, we could instead ask for a normal subgroup $H\leq G$, with a better bound on the index, but such that only the ``regularity" aspect holds, i.e., for all $g\in G$, either $|gH\cap A|<\epsilon|H|$ or $|gH\backslash A|<\epsilon|H|$. As previously noted, this leads to the weaker approximation $|A\smd D|<\epsilon|G|$ where $D\seq G$ is a union of cosets of $H$. Such a statement can nearly be obtained by combining Lemma \ref{lem:tech} with techniques from \cite{AFZ} (and their extensions in \cite{CoBogo}).

\begin{proposition}\label{prop:error}
Suppose $G$ is a finite group and $A\seq G$ is $k$-stable. Then for any $\epsilon>0$, there is a normal subgroup $H\leq G$ of index $\exp(\epsilon^{\nv O_k(1)})$, and a set $Z\seq G$ which is a union of cosets of $H$ with $|Z|<\epsilon|G|$, such that for any $g\in G\backslash Z$, either $|gH\cap A|< \epsilon|H|$ or $|gH\backslash A |< \epsilon|H|$. Moreover, there is a set $D\seq G$, which is a union of cosets of $H$, such that $|A\smd D|< \epsilon|G|$.
\end{proposition}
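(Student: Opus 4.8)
The plan is to imitate the proof of Theorem \ref{thm:SARn}, but replace the error term $m^{\nv 1}\eta|H|$ that appears in the claim with the weaker (and easier to arrange) error term $\eta|H|$, at the cost of getting a regularity conclusion with an exceptional set $Z$ rather than a global structural approximation with error $\epsilon|H|$. First I would fix $k\geq 2$ and $\epsilon>0$, choose an appropriate increasing $\sigma$ with $\sigma(x)\geq x^{O_k(1)}$ (so that the exponential-of-polynomial index bound $\exp(\epsilon^{\nv O_k(1)})$ results rather than the iterated exponential of Theorem \ref{thm:SARn}), and set $\delta=(\sigma_{k_*,1})^{k_*}(\epsilon)\geq\epsilon^{O_k(1)}$. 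By Corollary \ref{cor:tech} there is $\eta\in[\delta,\epsilon]$ with $H_0:=\Stab_\eta(A)=\Stab_{\sigma(\eta)}(A)$, which is a subgroup of index $m_0\leq(30\eta\inv)^{k-1}$; take $H:=\bigcap_{g\in G}gH_0g\inv$, a normal subgroup of index $m\leq m_0!\leq\exp(\epsilon^{\nv O_k(1)})$.

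Next I would run the core averaging argument from the claim in Theorem \ref{thm:SAR}, but applied to a single coset at a time with the threshold $\eta|H|$ instead of $m^{\nv 1}\eta|H|$. Concretely: if $g\in G$ satisfies both $|gH\cap A|\geq\eta|H|$ and $|gH\backslash A|\geq\eta|H|$, then with $B=H\cap g\inv A$ and $C=H\backslash g\inv A$ one has $|B|,|C|\geq\eta|H|$, $B$ is $k$-stable, and Corollary \ref{cor:VC}$(a)$ makes $B$ right $t$-generic in $H$ with $t=8(k-1)\eta^{\nv 2}$; averaging yields $h\in H$ with $|Bh\cap C|\geq t\inv|C|\geq(\eta^3/8(k-1))|H|=(\eta^3/8(k-1))m\inv|G|$. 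As in the earlier proof, $Bh\cap C=H\cap Hh\cap g\inv(Ah\backslash A)$, so $h\in H\leq\Stab_{\sigma(\eta)}(A)$, giving a contradiction provided $\sigma$ and $\eta$ are chosen so that $\eta^3/(8(k-1)m)>\sigma(\eta)$; since $m\leq(30\eta\inv)^{k-1}$ this is a polynomial-in-$\eta$ inequality, satisfiable by taking $\sigma(x)=x^{N}$ for $N=N(k)$ large and $\epsilon$ small. Hence for \emph{every} $g\in G$, either $|gH\cap A|<\eta|H|$ or $|gH\backslash A|<\eta|H|$. Wait — that would already give the regularity statement with \emph{empty} $Z$, which is too strong to be plausible; the catch is that the averaging step above actually fails to produce the contradiction unless $B$ is genuinely large relative to $H$, and the genericity bound $t=8(k-1)\eta^{\nv 2}$ is only useful when $\eta$ is not too small — which it is, being $\geq\delta$. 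So in fact the cleaner route is the following: do not try to control individual cosets directly, but instead use Lemma \ref{lem:tech} together with an $\epsilon$-net / popular-difference argument in the style of \cite{AFZ} to show that the set $Z_0$ of ``bad'' cosets $gH_0$ (those with $|gH_0\cap A|\geq\eta|H_0|$ and $|gH_0\setminus A|\geq\eta|H_0|$) satisfies $|Z_0|<\eta|G|$ outright, by bounding the number of such cosets using Haussler's Packing Lemma applied to $\{Ax:x\in G\}$; then push $Z_0$ up to a union $Z$ of $H$-cosets, losing a factor of $m_0$, and absorb this into $\epsilon$ by the choice of $\sigma$.

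I would then define $D$ to be the union of all cosets $gH$ with $|gH\cap A|\geq\frac12|H|$: for $g$ with $gH\not\subseteq Z$ the regularity dichotomy forces $|gH\smd D|<\epsilon|H|$ per coset, while the cosets inside $Z$ contribute at most $|Z|<\epsilon|G|$ to $|A\smd D|$; combining, $|A\smd D|<2\epsilon|G|$, and rescaling $\epsilon$ at the start gives the stated bound. The main obstacle, as the muddle above indicates, is getting the bookkeeping right on which threshold ($\eta|H|$ versus $m\inv\eta|H|$ versus $\eta|G|$) is used where, and in particular ensuring that the exceptional set $Z$ — which arises precisely because, unlike in Theorem \ref{thm:SAR}, we only control $H_0$-cosets and must coarsen to $H$-cosets, and because we use the weaker genericity estimate to keep the index at $\exp(\epsilon^{\nv O_k(1)})$ rather than the iterated exponential — really does have measure $<\epsilon|G|$. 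Everything else is a routine repetition of the arguments already given for Theorems \ref{thm:SAR} and \ref{thm:SARn} together with the VC-theoretic inputs of Section \ref{sec:VC}.
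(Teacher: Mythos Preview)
Your overall scaffolding matches the paper: apply Corollary \ref{cor:tech} to get $H_0=\Stab_\eta(A)$ with $\eta\geq\epsilon^{O_k(1)}$, take $H$ to be the normal core, and then argue regularity. The gap is in how you extract the regularity conclusion from $H$.

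In your first attempt, the line ``since $m\leq(30\eta\inv)^{k-1}$'' is wrong: that is the bound for $m_0=[G:H_0]$, not for $m=[G:H]$, which can be as large as $m_0!$. Hence the inequality $\eta^3/(8(k-1)m)>\sigma(\eta)$ forces $\sigma(\eta)$ to be exponentially small in $\eta\inv$, and then $(\sigma_{k_*,1})^{k_*}(\epsilon)$ is no longer $\epsilon^{O_k(1)}$; you are back at the iterated exponential of Theorem \ref{thm:SARn}. Your self-diagnosis (``the catch is that the averaging step fails unless $B$ is genuinely large relative to $H$'') is off: $B$ \emph{is} large in $H$; the problem is that $|H|/|G|=m\inv$ is exponentially small. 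Your ``cleaner route'' then invokes Haussler's Packing Lemma on $\{Ax:x\in G\}$ to bound the bad $H_0$-cosets, but packing does not give such a bound; the mechanism that actually works (and which you gesture at via \cite{AFZ}) is a convolution/averaging argument showing that if $H\subseteq\Stab_\theta(A)$ then the density function $g\mapsto|gH\cap A|/|H|$ is close to $\{0,1\}$ outside a set of measure roughly $\theta^{1/2}$.

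The paper's proof avoids this bookkeeping entirely. It takes $\sigma(x)=\tfrac{1}{2}x$ (not a power), applies Corollary \ref{cor:tech} starting from $\epsilon^4$ to obtain $H_0=\Stab_\eta(A)=\Stab_{\eta/2}(A)$ with $\epsilon^{O_k(1)}\leq\eta\leq\epsilon^4$, and sets $H=\bigcap_{g}gH_0g\inv$. The only fact used thereafter is $H\subseteq\Stab_{\eta/2}(A)$, which is exactly the hypothesis of \cite[Lemma 8.2]{CoBogo} (a black-box generalization of \cite[Lemma 2.4]{AFZ} to nonabelian groups). That lemma directly outputs $D$ and $Z$ with $|A\smd D|<\eta|G|$, $|Z|<\tfrac{1}{2}\eta^{1/2}|G|$, and coset regularity with threshold $\eta^{1/4}$; since $\eta\leq\epsilon^4$, the result follows.
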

\begin{proof}
Fix $\epsilon>0$ and assume $k\geq 2$. Apply Corollary \ref{cor:tech} with $\epsilon^4$ and $\sigma(x)=\frac{1}{2}x$ to obtain a subgroup $H_0:=\Stab_\eta(A)=\Stab_{\eta/2}(A)$ with $\epsilon^{O_k(1)}\leq \eta\leq\epsilon^4$. Now let $H=\bigcap_{g\in G}gH_0g\inv$. So $H$ is normal in $G$ and, by Corollary \ref{cor:VC}$(b)$, has index $\exp(\epsilon^{\nv O_k(1)})$. Since $H\seq\Stab_{\eta/2}(A)$, we can apply \cite[Lemma 8.2]{CoBogo}\footnote{This result generalizes and elaborates on methods in the proof of \cite[Lemma 2.4]{AFZ}, which focuses on abelian groups.} to obtain $D,Z\seq G$, which are each unions of cosets of $H$, such that $|A\smd D|<\eta|G|$, $|Z|<\frac{1}{2}\eta^{1/2}|G|$, and, for any $g\in G\backslash Z$, either $|gH\cap A|<\eta^{1/4}|H|$ or $|gH\backslash A|<\eta^{1/4}|H|$. Since $\eta\leq\epsilon^4$, we have the desired conclusions.
\end{proof}

Despite the improved bound in Proposition \ref{prop:error}, the conclusion is qualitatively sub-optimal due to the error set $Z$. Indeed, the absence of ``irregular cosets"  is one of the hallmarks of stability in the setting of arithmetic regularity. For example, by Theorem 1.6 and Remark 8.3 in \cite{CoBogo}, Proposition \ref{prop:error} holds under the weaker assumption that $\VC_\ell(A)<k$, but with the bound $\exp(O_{k,r}(\epsilon^{\nv k}))$ where $r$ is the exponent of $G$ (the dependence on $r$ is necessary by the example in Remark \ref{rem:mainproof}$(3)$).

\subsection{Definability}\label{sec:def}

In Theorem \ref{thm:SAR}, the subgroup $H$ is built from the set $A$, in the sense that $H=\Stab_\eta(A)$ where $\epsilon^{O_k(1)}\leq\eta\leq\epsilon$. On the other hand, Theorem \ref{thm:CPT} provides  ``first-order, quantifier-free" definability of $H$ from bi-translates of $A$. In this section, we use $\epsilon$-approximations for  set systems of bounded VC-dimension to recover this feature quantitatively. 

\begin{definition}
Given a set $X$ and a tuple $\abar=(a_1,\ldots,a_n)\in X^n$,  define the map  $\Av_{\abar}\colon\cP(X)\to[0,1]$ so that $\Av_{\abar}(S)=\frac{1}{n}|\{1\leq i\leq n: a_i\in S\}|$.
\end{definition}

\begin{theorem}[Vapnik \& Chervonenkis \cite{VCdim}]\label{thm:VCA}
Suppose $X$ is a nonempty finite set and $\cS\seq\cP(X)$ satisfies $\VC(\cS)\leq d$. Then for any $\epsilon>0$, there is a tuple $\abar\in X^n$, with $n\leq O(d\epsilon^{\nv 2}\log(d\epsilon\inv))$, such that for any $S\in \cS$, $||S|/|X|-\Av_{\abar}(S)|\leq\epsilon$.
\end{theorem}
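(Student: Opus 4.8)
The plan is to prove this by the probabilistic method, in the classical manner of Vapnik and Chervonenkis: let $\abar=(a_1,\dots,a_n)\in X^n$ consist of $n$ independent uniformly random elements of $X$, and show that for a suitable $n=O(d\epsilon^{\nv 2}\log(d\epsilon\inv))$, with positive probability $\abar$ is an $\epsilon$-approximation for $\cS$. Write $\mu(S)=|S|/|X|$, so the goal is that $|\mu(S)-\Av_{\abar}(S)|\le\epsilon$ for all $S\in\cS$ simultaneously. For a \emph{single} $S$ this is immediate from Hoeffding's inequality applied to the $n$ i.i.d.\ Bernoulli$(\mu(S))$ trials $\mathbf 1[a_i\in S]$, and since $X$ is finite one could even union-bound over the at most $|X|^d$ distinct traces $S\cap X$ (using $\VC(\cS)\le d$); but that would only give $n=O(d\epsilon^{\nv 2}\log|X|)$. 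The content of the argument is to remove the dependence on $|X|$ via double sampling.

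The first step is symmetrization against a ghost sample. Draw an independent copy $\bbar=(b_1,\dots,b_n)\in X^n$ and reduce to controlling $\sup_{S\in\cS}|\Av_{\abar}(S)-\Av_{\bbar}(S)|$: if $\abar$ fails, choose an $\abar$-measurable $S^*\in\cS$ with $|\mu(S^*)-\Av_{\abar}(S^*)|>\epsilon$; since $\Av_{\bbar}(S^*)$ has mean $\mu(S^*)$ and variance $\mu(S^*)(1-\mu(S^*))/n\le 1/(4n)$, Chebyshev's inequality gives $|\mu(S^*)-\Av_{\bbar}(S^*)|\le\epsilon/2$ with probability at least $1/2$ once $n\epsilon^2\ge 2$, and then $|\Av_{\abar}(S^*)-\Av_{\bbar}(S^*)|>\epsilon/2$. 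Hence
\[
\Pr\big[\exists S\in\cS:\ |\mu(S)-\Av_{\abar}(S)|>\epsilon\big]\ \le\ 2\,\Pr\big[\exists S\in\cS:\ |\Av_{\abar}(S)-\Av_{\bbar}(S)|>\tfrac12\epsilon\big],
\]
and the eventual choice of $n$ will satisfy $n\epsilon^2\ge 2$, using $\epsilon\le\tfrac12$.

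The second step is permutation symmetrization on the combined sample. Conditioning on the multiset $\{a_1,\dots,a_n,b_1,\dots,b_n\}$, the conditional law of $(\abar,\bbar)$ is unchanged if, independently for each $i$, we swap $a_i\leftrightarrow b_i$ with probability $\tfrac12$; so it suffices to bound, for each fixed realization of the $2n$ points, the probability over random signs $\varepsilon_1,\dots,\varepsilon_n\in\{\pm1\}$ that some $S\in\cS$ has $|\Av_{\abar'}(S)-\Av_{\bbar'}(S)|>\epsilon/2$, where $\abar',\bbar'$ are the swapped tuples. For fixed $S$ that difference equals $\tfrac1n\sum_{i=1}^n\varepsilon_i\xi_i$ with $\xi_i=\mathbf 1[a_i\in S]-\mathbf 1[b_i\in S]\in\{-1,0,1\}$, so Hoeffding's inequality bounds its tail by $2\exp(-n\epsilon^2/8)$. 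Crucially, this difference depends on $S$ only through the trace $S\cap\{a_1,\dots,a_n,b_1,\dots,b_n\}$, and the Sauer--Shelah lemma (a consequence of $\VC(\cS)\le d$; cf.\ Remark \ref{rem:VCdim}) bounds the number of such traces by $(2en/d)^d$, so a union bound gives failure probability at most $2(2en/d)^d\exp(-n\epsilon^2/8)$ for the swap step, and therefore
\[
\Pr\big[\exists S\in\cS:\ |\mu(S)-\Av_{\abar}(S)|>\epsilon\big]\ \le\ 4\,(2en/d)^d\exp(-n\epsilon^2/8).
\]

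Finally, a routine computation shows that for a large enough absolute constant $C$ and $n=\lceil Cd\epsilon^{\nv 2}\log(d\epsilon\inv)\rceil$ the right-hand side is $<1$ (and $n\epsilon^2\ge 2$), since the term $n\epsilon^2/8=\Omega(d\log(d\epsilon\inv))$ dominates $\log 4+d\log(2en/d)=O(d\log(d\epsilon\inv))$. Thus some $\abar\in X^n$ is an $\epsilon$-approximation, which is the claim. The main obstacle is the double symmetrization — in particular the second stage, which is what converts the supremum over the (possibly infinite) family $\cS$ into a finite union bound governed by the shatter function with a clean Rademacher tail; once that is in place, the appeals to Sauer--Shelah, Chebyshev, and Hoeffding, and the closing constant-chasing, are all standard.
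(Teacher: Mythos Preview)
The paper does not prove Theorem~\ref{thm:VCA}; it is simply quoted as a classical result of Vapnik and Chervonenkis \cite{VCdim} and used as a black box. So there is no ``paper's own proof'' to compare against.

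That said, your argument is correct and is precisely the standard double-sampling proof going back to \cite{VCdim}: symmetrize against a ghost sample to replace $\mu$ by $\Av_{\bbar}$, then swap coordinates to reduce to a Rademacher tail, and finally union-bound over the $(2en/d)^d$ traces on the combined sample via Sauer--Shelah. The constant-chasing at the end is fine (with the paper's standing convention $\epsilon\le\tfrac12$), and your reference to Remark~\ref{rem:VCdim} is appropriate since Sauer--Shelah is invoked there. One cosmetic point: Remark~\ref{rem:VCdim} mentions Sauer--Shelah only in passing rather than stating it, so if this were to be inserted into the paper you might prefer to cite it directly.
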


\begin{proposition}\label{prop:bool}
Suppose $G$ is a finite group and $A\seq G$ satisfies $\VC_r(A)\leq d$. Assume that $\Stab_\kappa(A)=\Stab_\lambda(A)$ for some $0<\lambda<\kappa<1$. Then there are $g_1,\ldots,g_n\in G$, for some $n\leq O(d(\kappa-\lambda)^{\nv 2}\log(d(\kappa-\lambda)\inv))$,  such that 
\[
\textstyle\Stab_\kappa(A)=\bigcup_{t=1}^{\ell} \bigcap_{i=1}^n X_{t,i},
\]
where $\ell\leq {n\choose (\kappa+\lambda)n/2}$ and each $X_{t,i}$ is either $g_iA$ or $G\backslash g_iA$.
\end{proposition}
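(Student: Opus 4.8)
The plan is to use Theorem \ref{thm:VCA} to produce a single finite sample of group elements that simultaneously approximates, up to additive error $\frac{\kappa-\lambda}{2}$, the densities $|Ax \smd A|/|G|$ for all $x\in G$, and then express membership in $\Stab_\kappa(A)$ purely in terms of which of the sampled translates contain $x$. First I would set up the right set system: consider $\cS=\{Ax : x\in G\}$ on $X=G$, which has $\VC(\cS)\leq\VC_r(A)\leq d$ by definition. Apply Theorem \ref{thm:VCA} with error parameter $\epsilon_0=\frac{\kappa-\lambda}{2}$ to obtain a tuple $\bar g=(g_1,\dots,g_n)\in G^n$ with $n\leq O(d\epsilon_0^{\nv 2}\log(d\epsilon_0\inv))=O(d(\kappa-\lambda)^{\nv 2}\log(d(\kappa-\lambda)\inv))$ such that $\big||Ag_i^{\text{??}}|/|G| - \Av_{\bar g}(\,\cdot\,)\big|$ is controlled — more precisely, for every $S\in\cS$, $||S|/|G|-\Av_{\bar g}(S)|\leq\epsilon_0$.

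The key observation is how to turn this into a statement about $Ax\smd A$. For fixed $x\in G$, write $Ax\smd A = (Ax\setminus A)\cup(A\setminus Ax)$ as a disjoint union, so $|Ax\smd A| = |Ax| + |A| - 2|Ax\cap A|$. Each of these three sets is (after a translation) obtained from members of $\cS$: $|Ax|=|A|$ trivially, and $Ax\cap A$ — hmm, $Ax\cap A$ is not itself a right translate of $A$. The cleaner route is to instead count via the sampled points directly. For the sample tuple $\bar g$, note that $g_j\in Ax\smd A$ iff exactly one of $g_j\in Ax$, $g_j\in A$ holds, i.e. iff exactly one of $g_jx\inv\in A$, $g_j\in A$ holds. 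Here is where right-invariance of the relevant set system must be invoked carefully: the collection $\{Ax:x\in G\}$ is closed under the relevant operations only up to the indices we sample. So I would apply Theorem \ref{thm:VCA} not to $\{Ax\}$ but to the Boolean-combination-friendly reformulation, using that $|Ax\smd A|/|G| = \mathbb{E}_{g\in G}\mathbf{1}[g\in Ax\smd A]$ and that the indicator $\mathbf{1}[g\in Ax\smd A]$ is determined by the pair $(\mathbf{1}[g\in Ax],\mathbf{1}[g\in A])$. The right way: apply Theorem \ref{thm:VCA} to the set system $\{Ax : x\in G\}\cup\{A\}$, whose VC-dimension is still $\leq d+O(1)$ — actually just $\leq d$ after absorbing $A=Ae$ — to get that $\Av_{\bar g}$ approximates $|Ax|/|G|$ and $|A|/|G|$, but that alone does not control $|Ax\cap A|$.

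The honest fix, and I expect this to be the main obstacle, is to observe that the quantity we need is $\Av_{\bar g}(Ax\smd A)$ itself being close to $|Ax\smd A|/|G|$, and $Ax\smd A$ ranges over the set system $\cT=\{Ax\smd A : x\in G\}$. So I would apply Theorem \ref{thm:VCA} directly to $\cT$. It remains to bound $\VC(\cT)$: since $Ax\smd A = (Ax\cup A)\setminus(Ax\cap A)$ is a Boolean combination of $Ax$ (VC-dim $\leq d$) and the fixed set $A$, and adding a single fixed set to a family and taking symmetric differences increases VC-dimension by at most a constant (indeed by Fact-style arguments; one can check $\VC(\cT)\leq 2d+2$ or simply cite that $\cT$ is obtained from $\{Ax\}$ and $\{A\}$ by a fixed Boolean operation, hence has VC-dimension $O(d)$). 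With $\bar g$ in hand: for each $x$, $x\in\Stab_\kappa(A)$ iff $|Ax\smd A|\leq\kappa|G|$; by the approximation, this is sandwiched — if $x\in\Stab_\lambda(A)$ then $|Ax\smd A|\leq\lambda|G|$ so $\Av_{\bar g}(Ax\smd A)\leq\lambda+\epsilon_0=\frac{\kappa+\lambda}{2}$, while if $x\notin\Stab_\kappa(A)$ then $|Ax\smd A|>\kappa|G|$ so $\Av_{\bar g}(Ax\smd A)>\kappa-\epsilon_0=\frac{\kappa+\lambda}{2}$. Since $\Stab_\kappa(A)=\Stab_\lambda(A)$ by hypothesis, membership in this common set is exactly characterized by $\Av_{\bar g}(Ax\smd A)\leq\frac{\kappa+\lambda}{2}$, i.e. by $|\{i: g_i\in Ax\smd A\}|\leq\frac{\kappa+\lambda}{2}n$. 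Now $g_i\in Ax\smd A$ is a Boolean condition on whether $x\in$ (a translate of $A$): $g_i\in Ax \iff x\in A\inv g_i$ — wait, $g_i\in Ax\iff g_i = ax$ for some $a\in A\iff x\in a\inv g_i$ for some $a\in A$, which is not a single translate either. The cleanest is: $g_i\in Ax\iff x\in g_i\inv A$ fails in general; rather $Ax\ni g_i\iff x\in A\inv g_i$. So set $h_i$ so that the condition "$g_i\in Ax$" becomes "$x\in X_i$" for an appropriate translate — specifically one checks $g_i\in Ax \iff g_i x\inv\in A$, which is $x\in \{y: g_iy\inv\in A\}$; this set is $(A\inv g_i)\inv\cdot$... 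I would just define, for each $i$, the set $Y_i=\{x: g_i\in Ax\}$ and verify $Y_i$ is a right translate of $A\inv$, hence a translate of a stable set, and also $g_i\in A$ is a constant (independent of $x$). Then $g_i\in Ax\smd A$ is either "$x\in Y_i$" (when $g_i\notin A$) or "$x\notin Y_i$" (when $g_i\in A$) — a single literal $X_{1,i}$ in $x$. Finally, "$|\{i:x\in X_{1,i}\}|\leq\frac{\kappa+\lambda}{2}n$" is, by inclusion over all subsets $T\subseteq\{1,\dots,n\}$ of size $\leq\frac{\kappa+\lambda}{2}n$ that equal the set of satisfied literals, a disjunction of $\ell\leq\binom{n}{(\kappa+\lambda)n/2}$ conjunctions of $n$ literals, giving exactly the claimed form $\bigcup_{t=1}^\ell\bigcap_{i=1}^n X_{t,i}$ with each $X_{t,i}$ equal to $g_iA$ or $G\setminus g_iA$ after renaming the translates (absorbing the $\inv$ and the constant-flip into the choice of the $g_i$ and the literal). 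The remaining bookkeeping — confirming $Y_i$ is literally of the form $g_iA$ or that the complement/inverse can be reindexed to land on $g_iA$ versus $G\setminus g_iA$, and confirming $\binom{n}{(\kappa+\lambda)n/2}$ is the right count — is routine.
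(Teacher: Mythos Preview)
Your approach is essentially the same as the paper's: apply Theorem~\ref{thm:VCA} to the symmetric-difference system $\{Ax\smd A:x\in G\}$ (whose VC-dimension is $O(d)$; the paper cites \cite[Lemma 4.4]{LovSzeg} for the bound $10d$), threshold at the midpoint $(\kappa+\lambda)/2$, and then unpack ``at most $(\kappa+\lambda)n/2$ of the sampled points lie in $Ax\smd A$'' as a disjunction of conjunctions.

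There is, however, a real gap in your final step. You correctly compute that $Y_i:=\{x:g_i\in Ax\}=A\inv g_i$, a right translate of $A\inv$. You then assert that this can be ``renamed'' into the form $h_iA$. That is false in a general (nonabelian) group with $A\neq A\inv$: the family $\{A\inv g:g\in G\}$ is simply not the same as $\{gA:g\in G\}$, and the statement demands literals of the form $g_iA$ or $G\setminus g_iA$. So your Boolean expression, as written, uses the wrong building blocks.

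The fix is the one the paper uses and which your write-up misses: $\Stab_\kappa(A)$ is \emph{symmetric} (since $|Ax\inv\smd A|=|A\smd Ax|=|Ax\smd A|$). Hence one may characterize membership of $x\inv$ instead of $x$; equivalently, work with $Z_{a}:=\{x:a\in Ax\inv\smd A\}$. Now $a\in Ax\inv\iff ax\in A\iff x\in a\inv A$, so $Z_a$ is either $a\inv A$ or $G\setminus a\inv A$, and setting $g_i=a_i\inv$ gives literals of exactly the required form $g_iA$ or $G\setminus g_iA$. Everything else in your outline is correct and matches the paper.
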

\begin{proof}
Let $\cS=\{Ax\smd A:x\in G\}$. Then $\VC(\cS)\leq 10d$ by \cite[Lemma 4.4]{LovSzeg}.
Let $\theta=(\kappa-\lambda)/2$. Given $x\in G$, set $\mu_x=|Ax\smd A|/|G|$. By Theorem \ref{thm:VCA}, there is a tuple $\abar=(a_1,\ldots,a_n)\in G^n$, for some $n\leq O(d\theta^{\nv 2}\log(d\theta\inv))$,  such that for all $x\in G$, if we set $\alpha_x=\Av_{\abar}(Ax\smd A)$, then $|\mu_x-\alpha_x|\leq\theta$.

Now define $B=\left\{x\in G:\alpha_{x}\leq\lambda+\theta\right\}$. If $x\in B$, then $\mu_x\leq\alpha_x+\theta\leq\lambda+2\theta=\kappa$, and so $x\in \Stab_\kappa(A)$. Conversely, if $x\in \Stab_\kappa(A)$ then $x\in \Stab_\lambda(A)$, and so $\alpha_x\leq\mu_x+\theta\leq\lambda+\theta$, i.e.,  $x\in B$. So $B=\Stab_\kappa(A)$. 

Finally, we show that $B$ is a Boolean combination of the desired form. Given $a\in G$, define $Z_a=\{x\in G:a\in Ax\inv \smd A\}$. Then $Z_a=a\inv A$ if $a\not\in A$, and $Z_a=G\backslash a\inv A$ if $a\in A$. Given $\sigma\seq[n]$, define
\[
\textstyle Y_\sigma=\bigcap_{i\in \sigma}Z_{a_i}\cap\bigcap_{i\not\in\sigma}G\backslash Z_{a_i}.
\]
Let $\Sigma=\{\sigma\seq [n]:|\sigma|/n\leq \lambda+\theta\}$. Then it follows from the definitions, and symmetry of $B$, that $B=\bigcup_{\sigma\in\Sigma}Y_\sigma$. So $B=\Stab_\kappa(A)$ has the desired form.
\end{proof}

\begin{remark}
The previous proposition can also be seen as a special case of ``definability" of probability measures that are finitely approximable in the sense of Theorem \ref{thm:VCA}. This situation is dealt with in much greater generality in \cite{ChStNIP}. 
\end{remark}

\begin{proof}[\textnormal{\textbf{Proof of Theorem \ref{thm:extras}}}]
For part $(a)$, note that by the proof of Theorem \ref{thm:SAR}, we have $H=\Stab_\eta(A)=\Stab_{\sigma(\eta)}(A)$ where $\epsilon^{O_k(1)}\leq\eta\leq\epsilon$ and $\sigma(\eta)\leq \frac{1}{2}\eta$. So apply Proposition \ref{prop:bool} with $\kappa=\eta$ and $\lambda=\sigma(\eta)$. Part $(b)$ follows similarly, using Theorem \ref{thm:SARn} (and its proof).
\end{proof}

\subsection{Tripling}\label{sec:MPW}

In this section, we prove Theorem \ref{thm:SARtrip}, which gives a structural approximation of finite stable sets satisfying a weak form of small tripling (which is called ``small alternation" in \cite{CoBogo}). To motivate this assumption, consider a group $G$ and a finite nonempty set $A\seq G$. Then $\Stab^A_\epsilon(A)\seq A\inv A$ for any $\epsilon>0$ since, if $|Ax\smd A|<\epsilon|A|$ then $Ax\cap A\neq\emptyset$, i.e., $x\in A\inv A$. So while $G$ may be infinite, we can use iterated products of $A$ and $A\inv$ as ``finite domains" for $\Stab^A_\epsilon(A)$ and various translates. In order for this to be useful, we need to bound the size of such products in terms of $|A|$, and this is where small alternation comes into play. Specifically, we will use the following consequence of the triangle inequality for \emph{Ruzsa distance}  (see, e.g., the proof of \cite[Proposition 3.2$(b)$]{CoBogo}). 

\begin{proposition}\label{prop:PRI}
Suppose $G$ is a group and $A\seq G$ is finite with $|AA\inv A|\leq c|A|$. Then $|(A\inv A)^3|\leq c^4|AA\inv A|$.
\end{proposition}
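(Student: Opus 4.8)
The statement to prove is Proposition \ref{prop:PRI}: if $|AA\inv A|\leq c|A|$, then $|(A\inv A)^3|\leq c^4|AA\inv A|$. The natural approach is to invoke the Ruzsa triangle inequality for the Ruzsa distance $d(U,V)=\log\frac{|UV\inv|}{|U|^{1/2}|V|^{1/2}}$, which satisfies $d(U,W)\leq d(U,V)+d(V,W)$, together with the standard fact that $d(U,V)=d(V\inv,U\inv)$ and the Ruzsa covering/Plünnecke-type estimates that control iterated products once a single product ratio is bounded. Concretely, $(A\inv A)^3 = A\inv A A\inv A A\inv A$, so the plan is to realize this sixfold alternating product as a product set $UV\inv$ (or a chain of such) whose ``endpoints'' are controlled by $|AA\inv A|$, and then telescope using the triangle inequality.

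The key steps, in order, would be: first, rewrite $(A\inv A)^3$ as a product of translates of the basic sets $A$ and $A\inv$ and identify the relevant Ruzsa distances; the symmetry $d(A,A)=d(A\inv,A\inv)$ and $d(AA\inv,A)$-type quantities are the building blocks. Second, observe that $|AA\inv A|\leq c|A|$ bounds $d(A\inv A, A\inv)$ or equivalently the ratio $|A\inv A A\inv|/|A|$ (note $A\inv A A\inv = (AA\inv A)\inv$, so this has size $\leq c|A|$ directly). Third, apply the triangle inequality twice: bound $d(A\inv A, (A\inv A)\inv (A\inv A))$-type terms by chaining through the set $A$ or $AA\inv$, each hop costing a factor controlled by $c$. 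Since we want a clean factor of $c^4$ multiplying $|AA\inv A|$ rather than $|A|$, the bookkeeping should be arranged so that one copy of $|AA\inv A|$ is left explicit and the remaining slack is absorbed into $c^4$. Fourth, unwind the logarithms back into the multiplicative inequality $|(A\inv A)^3|\leq c^4|AA\inv A|$.

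I would expect the main obstacle to be purely the combinatorial bookkeeping: choosing the right intermediate sets in the triangle-inequality chain so that (a) every hop is genuinely bounded by a power of $c$ using only the hypothesis $|AA\inv A|\leq c|A|$ (and its inverse-image restatement), and (b) the exponents add up to exactly $4$ rather than something larger, and (c) one factor of $|AA\inv A|$ survives on the right-hand side instead of $|A|$. None of the individual inequalities is hard — they are all instances of Ruzsa's triangle inequality and the elementary identity $|U\inv|=|U|$ — but getting the constant down to $c^4$ (as opposed to, say, $c^5$ or $c^6$) requires being careful about which product ratios one pays for. The cited source (the proof of \cite[Proposition 3.2$(b)$]{CoBogo}) presumably carries out exactly this optimized chaining, so I would follow that template: set up the Ruzsa-distance chain from $A\inv A$ to itself through at most four intermediate steps, each bounded via $|AA\inv A|\leq c|A|$, and then read off the result.
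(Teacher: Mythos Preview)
Your proposal is correct and matches the paper's approach exactly. The paper does not supply its own proof of this proposition; it simply states that the result is a consequence of the triangle inequality for Ruzsa distance and refers to the proof of \cite[Proposition 3.2$(b)$]{CoBogo} for the details, which is precisely the template you describe.
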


\begin{proof}[\textnormal{\textbf{Proof of Theorem \ref{thm:SARtrip}}}]
The result is trivial for $k=1$. So fix $k\geq 2$, $c\geq 1$, and $\epsilon> 0$.  Without loss of generality, assume $\epsilon<  (8(k-1)(30c)^{4k-4})\inv$. Let $\sigma(x)=c^{\nv 1} x^{4k}$ and set $\delta=(\sigma_{k_*,c^4})^{k_*}(\epsilon)$. Then $\delta\geq \Omega_k((c\inv\epsilon)^{O_k(1)})$.

 Now fix a  group $G$ and a finite nonempty $k$-stable set $A\seq G$ with $|AA\inv A|\leq c|A|$. Then $\phi_A(x;y,z)$ is $k_*$-stable and right-invariant. We apply Lemma \ref{lem:tech} to $\phi_A$, with $\sigma$ as above and  $X=AA\inv A$ (so $|X\inv X|\leq c^4|X|$ by Proposition \ref{prop:PRI}). This yields $\eta\in [\delta,\epsilon]$ such that, for any $g\in G$, if $Ag\smd A\seq X$ and $|Ag\smd A|\leq \eta|X|$, then $|Ag\smd A|\leq \sigma(\eta)|X|$. Since $\Stab^A_\eta(A)\seq A\inv A$ and $|X|\leq c|A|$, it follows that $\Stab^A_\eta(A)=\Stab^A_{\eta^{4k}}(A)$. So $H:=\Stab^A_\eta(A)$ is a subgroup of $G$ contained in $A\inv A$. Since $H=\Stab^{A\inv}_\eta(A)$ and $|AA\inv|\leq c|A|$, it follows from Corollary \ref{cor:VC}$(b)$ that $A\seq CH$ for some  $C\seq G$  with $|C|\leq (30c\eta\inv)^{k-1}\leq O_k((c\epsilon\inv)^{O_k(1)})$. Without loss of generality, we can change coset representatives and assume $C\seq A$. 
 
Let $m=|C|$ and fix $g\in C$. Toward a contradiction, suppose we have $|gH\cap A|\geq m\inv\eta|H|$ and $|gH\backslash A|\geq m\inv\eta|H|$. Let $B_1=H\cap g\inv A$ and $B_2=H\backslash g\inv A$. Then, as in the proof of Theorem \ref{thm:SAR},  $B_1$ is $k$-stable and $t$-generic in  $H$ where $t= 8(k-1)(m\eta\inv)^2$. 
So there is $h\in H$ such that 
 \[
 |B_1h\cap B_2|\geq \frac{\eta}{m^2t}|A|>\eta^{4k}|A|,
 \] 
which contradicts $H=\Stab^A_{\eta^{4k}}(A)$. Thus we have that for any $g\in C$, either $|gH\cap A|<m\inv\eta|H|$ or $|gH\backslash A|<m\inv\eta|H|$. Define $D$ to be the set of $g\in C$ such that $|gH\cap A|\geq m\inv\eta|H|$. Since $A\seq CH$, we have $|A\smd DH|<\eta|H|\leq\epsilon|H|$ (using calculations similar to the end of the proof of Theorem \ref{thm:SAR}).
\end{proof}

The proof of Theorem \ref{thm:SARtrip} motivates analogous comments as those made in Remark \ref{rem:mainproof}. For example,
the exponent $O_k(1)$ on $c\epsilon\inv$ in the bound for $|C|$ is at most $k^{\exp^2(2k)}$. By choosing a constant $\epsilon$, we also obtain a strong form of the ``polynomial Freiman-Ruzsa conjecture" for finite $k$-stable sets in arbitrary groups.

\begin{corollary}
Suppose $G$ is a group and $A\seq G$ is a finite nonempty $k$-stable set with $|AA\inv A|\leq c|A|$. Then there is a subgroup $H\leq G$ such that $H\seq A\inv A$ and $A$ is covered by $O_k(c^{k^{\exp^2(2k)}})$ left cosets of $H$.
\end{corollary}

\begin{remark}
As with Theorem \ref{thm:CPT}, the proof of Theorem \ref{thm:MPW} from \cite{MPW} also shows that the group $H$ is a finite Boolean combination of translates of $A$ (of bounded complexity). One can recover this quantitatively using a similar application of Theorem \ref{thm:VCA}. We leave the details as an exercise for the reader. 
\end{remark}

\section{Infinite stable sets}\label{sec:MT}

In the proof of Theorem \ref{thm:SAR}, the main significance of working with finite groups was the behavior of the normalized counting measure. This motivates the question of whether similar results hold in the setting of other measures, e.g., for stable subsets of amenable groups. In this section, we show that this is indeed the case. In fact, this setting is nicely focused due to the result that, in \emph{any group}, one always has a canonical invariant measure on stable sets. To clarify this assertion, let $G$ be a fixed group and let $\cB$ be the Boolean algebra of stable subsets of $G$. It is shown in \cite[Theorem 1.1]{CoLSGT} that there is a unique bi-invariant finitely-additive probability measure on $\cB$, which we denote $\mu$. So, in particular, if $G$ is finite then $\mu$ coincides with the normalized counting measure (restricted to $\cB$). Altogether, the following result generalizes Theorem \ref{thm:SAR} to this setting.

\begin{theorem}\label{thm:SARm}
For any $k$-stable set $A\seq G$, and any $\epsilon>0$, there is a subgroup $H\leq G$ of index $\epsilon^{\nv O_k(1)}$ and a set $D\seq G$, which is a union of left cosets of $H$, such that $\mu(A\smd D)<\epsilon\mu(H)$.
\end{theorem}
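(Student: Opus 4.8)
The plan is to reduce Theorem~\ref{thm:SARm} to the finite case essentially verbatim, using the fact that the bi-invariant measure $\mu$ on the Boolean algebra $\cB$ of stable subsets of $G$ behaves formally like the normalized counting measure on a finite group. First I would observe that every ingredient in the proof of Theorem~\ref{thm:SAR}, and in the supporting results Lemma~\ref{lem:tech}, Corollary~\ref{cor:tech}, and Corollary~\ref{cor:VC}, uses only three features of the counting measure: it is bi-invariant, it is finitely additive, and a pigeonhole/averaging step of the form ``if $\mu(B)\geq\beta$ and $B\seq \bigcup_{f\in F}Cf$ with $|F|\leq t$, then $\mu(B\cap Cf)\geq\beta/t$ for some $f\in F$.'' All of these hold for $\mu$ on $\cB$: bi-invariance and finite additivity are given by \cite[Theorem 1.1]{CoLSGT}, and the averaging step is valid because $B$, $C$, and all the finitely many translates $Cf$ lie in $\cB$ (translates and Boolean combinations of stable sets are stable by Fact~\ref{fact:BA}), so $\mu$ is defined on each piece and the usual averaging inequality over a finite cover goes through.

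Next I would re-run the VC-theoretic tools with $\mu$ in place of normalized counting. The $\epsilon$-net theorem and Haussler's Packing Lemma (Theorem~\ref{thm:VC}) are stated for finite $X$, so the direct substitution is not available; instead I would invoke the versions of Corollary~\ref{cor:VC} that already work with an arbitrary finite ``domain'' set --- but here there is no finite domain. The cleaner route is to note that genericity statements for stable sets with respect to $\mu$ are exactly what is provided by stable group theory: a $k$-stable $A\seq G$ with $\mu(A)>\epsilon$ is $t$-generic for $t=t(k,\epsilon)$, and similarly $\Stab_\eta(A):=\{x:\mu(Ax\smd A)\leq\eta\}$ is $t$-generic, where now $\Stab_\eta$ is defined via $\mu$. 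These are precisely the analogues of Corollary~\ref{cor:VC}$(a)$,$(b)$ with the same polynomial bounds, since the Packing Lemma bound $(30\epsilon\inv)^d$ is domain-independent and the argument only needs finitely many pairwise-$\mu$-far translates; one can either cite the relevant statement from \cite{CoLSGT} or reprove it by the same maximal-packing argument, which terminates because Haussler's bound caps the packing number uniformly.

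With these in hand, Lemma~\ref{lem:tech} and Corollary~\ref{cor:tech} hold with $\mu$ substituted for $\mu(A)=|A|/|X|$ throughout (take $r=1$, ``$X=G$''), giving $\eta\in[\delta,\epsilon]$ with $\delta\geq\epsilon^{O_k(1)}$ and $\Stab_\eta(A)=\Stab_{\eta^{4k}}(A)=:H$; as before $H\inv=H$ and $HH\seq\Stab_{2\eta^{4k}}(A)\seq H$, so $H$ is a subgroup, of index $m\leq(30\eta\inv)^{k-1}\leq\epsilon^{\nv O_k(1)}$ by the $\mu$-version of Corollary~\ref{cor:VC}$(b)$ --- and since $[G:H]$ is finite, $\mu$ restricted to the (finite, Boolean) algebra of unions of cosets of $H$ is just normalized counting on $G/H$, so $\mu(H)=1/m$. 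Then I would copy the Claim and the final estimate from the proof of Theorem~\ref{thm:SAR} word for word, replacing each $|{\cdot}|/|G|$ by $\mu$ and each $|{\cdot}|/|H|$ by $\mu({\cdot})/\mu(H)$: the averaging step producing $h\in H$ with $\mu(Ah\backslash A)>\eta^{4k}$ works because $Ah\backslash A$ and all the auxiliary sets are stable, contradicting $H=\Stab_{\eta^{4k}}(A)$; and summing the coset errors gives $\mu(A\smd D)<\eta\mu(H)\leq\epsilon\mu(H)$.

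The main obstacle is bookkeeping rather than mathematics: one must check that the counting-measure manipulations in Lemma~\ref{lem:tech}, Corollary~\ref{cor:VC}, and the proof of Theorem~\ref{thm:SAR} never quietly use finiteness of $G$ itself (as opposed to finiteness of some relevant index or packing), and in particular that every set to which $\mu$ is applied genuinely lies in $\cB$. The one genuinely new point requiring care is the genericity/packing input: since Haussler's Packing Lemma is phrased for finite ground sets, I would either extract the domain-free maximal-packing formulation (which is what the proof of Corollary~\ref{cor:VC}$(b)$ actually uses, the finiteness of $X$ entering only to guarantee the packing process stops --- and here it stops because the Packing Lemma bound is a priori finite) or cite the measure-theoretic genericity results already established in \cite{CoLSGT}. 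Once that is pinned down, the rest is a mechanical transcription, and the polynomial bound $\epsilon^{\nv O_k(1)}$ is preserved because none of the constants depended on $|G|$.
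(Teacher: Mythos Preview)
Your overall strategy---transcribe the finite proof replacing normalized counting by $\mu$---is exactly what the paper does, and your identification of the genericity/packing input as ``the one genuinely new point requiring care'' is right on target. But your proposed resolution of that point has a gap. The measure $\mu$ is only \emph{finitely} additive on the Boolean algebra $\cB$, so the probabilistic proofs of the $\epsilon$-net theorem and Haussler's Packing Lemma (which sample i.i.d.\ from the measure) are not directly available. Your claim that ``Haussler's bound caps the packing number uniformly'' and that finiteness of $X$ ``enter[s] only to guarantee the packing process stops'' is not correct: Haussler's Lemma is stated and proved for normalized counting on a finite ground set, and the finiteness is used throughout the argument, not merely for termination. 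You have not explained how to transfer either VC bound to $\mu$. Citing \cite{CoLSGT} is also speculative---that paper furnishes the \emph{existence} and uniqueness of $\mu$, but the present paper does not invoke it for quantitative genericity.

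The ingredient you are missing is \emph{finite approximability of $\mu$ over a stable family}. By \cite[Lemma~4.3]{ChStNIP}, for any stable relation there is a finite tuple $\abar\in G^n$ with $|\mu(S)-\Av_{\abar}(S)|\leq\epsilon/2$ uniformly over the relevant instances $S$. Applying this to $x\in yA$ (respectively $x\in Ay$), invariance of $\mu$ gives $\Av_{\abar}(S)>\epsilon/2$ for every translate, so the finite $\epsilon$-net theorem on the index set $\{1,\ldots,n\}$ yields a transversal $F$ of size $\leq 32d\epsilon^{\nv2}$; this is the paper's replacement for Corollary~\ref{cor:VC}$(a)$. The packing bound for $\Stab^\mu_\epsilon(A)$ then follows from this $\epsilon$-net statement via the Lov\'asz--Szegedy deduction noted in Remark~\ref{rem:VCdim}, rather than from Haussler's Lemma directly---which is why the constants in the paper's conditions $(i)$ and $(ii)$ degrade to $32d\epsilon^{\nv2}$ and $(320d)^d\epsilon^{\nv20d}$. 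Once these two genericity statements are in hand, the rest of your transcription (Lemma~\ref{lem:tech} with $r=1$, the claim on cosets, the final sum) goes through exactly as you say, and the polynomial shape $\epsilon^{\nv O_k(1)}$ survives.
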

\begin{proof}
As usual, we aim for a stronger coset regularity property, namely, we find $H$ so that for all $g\in G$, either $\mu(gH\cap A)<\epsilon\mu(H)^2$ or $\mu(gH\backslash A)<\epsilon\mu(H)^2$. Given $A\in\cB$ and $\epsilon> 0$, let $\Stab^\mu_\epsilon(A)=\{x\in G:\mu(Ax\smd A)\leq\epsilon\}$. In order to mimic the proof of Theorem \ref{thm:SAR} using $\mu$, it suffices to show following properties:
\begin{enumerate}[$(i)$]
\item If $A\in\cB$, $\VC_\ell(A)\leq d$, and $\mu(A)>\epsilon$, then $A$ is $32d\epsilon^{\nv 2}$-generic in $G$.
\item If $A\in\cB$, $\VC_r(A)\leq d$, and $\epsilon>0$, then $\Stab^\mu_\epsilon(A)$ is $(320d)^d\epsilon^{\nv 20d}$-generic in $G$.
\end{enumerate}
We will establish these properties via the following claim, which uses the well-known fact from stability theory that probability  measures associated to stable relations are finitely approximable in the sense of Theorem \ref{thm:VCA} (see, e.g., \cite[Lemma 4.3]{ChStNIP}).
\medskip

\noindent\textit{Claim:} Fix $A\in\cB$ and suppose $0<\epsilon<\mu(A)$. Set $d=\VC_\ell(A)$ (resp., $d=\VC_r(A)$) and $\cS=\{gA:g\in G\}$ (resp., $\cS=\{Ag:g\in G\}$). Then there is $F\seq G$ such that $|F|\leq 32d\epsilon^{\nv 2}$ and $F\cap S\neq\emptyset$ for all $S\in\cS$.

\noindent\textit{Proof:} By \cite[Lemma 4.3]{ChStNIP} applied to $x\in yA$ (resp., $x\in Ay$) there is $\abar\in G^n$ such that, for any $S\in\cS$, $|\mu(S)-\Av_{\abar}(S)|\leq \epsilon/2$. Therefore $\Av_{\abar}(S)>\epsilon/2$ for all $S\in\cS$ by invariance of $\mu$. So the claim follows from Theorem \ref{thm:VC}$(a)$ (view $\Av_{\abar}$ as the normalized counting measure on $\{1,\ldots,n\}$). \claim
\medskip

Conditions $(i)$ and $(ii)$ now follow from the claim as in Corollary \ref{cor:VC}, but we replace Theorem \ref{thm:VC}$(b)$ with \cite[Lemma 4.6]{LovSzeg} (see Remark \ref{rem:VCdim}).
\end{proof}

\begin{remark}
Using \cite[Lemma 4.3]{ChStNIP}, and following the same steps as in Proposition \ref{prop:bool}, one can express the subgroup $H$ in Theorem \ref{thm:SARm} as a Boolean combination (of bounded complexity) of left translates of $A$. Also, as in Theorem \ref{thm:SARn}, one can obtain a version of Theorem \ref{thm:SARm} with a \emph{normal} subgroup of index $\exp^{O_k(1)}(\epsilon\inv)$.
\end{remark}

We conclude with a remark on Corollary \ref{cor:tech}, which was our key use of stability in the proof of Theorem \ref{thm:SAR}. Let $G$  be as above, and suppose $A\seq G$ is stable. Then we have the stabilizers $\Stab^\mu_\epsilon(A)$ for $\epsilon>0$, as in the proof of Theorem \ref{thm:SARm}, and we can analogously define $\Stab^\mu_0(A)$, which is a \emph{subgroup} of $G$. Let $\cB_A$ denote the sub-algebra of $\cB$ generated by all right translates of $A$. By \cite[Theorem 2.3]{CPT} and its proof, $\mu{\upharpoonright}\cB_A$ is the unique right-invariant finitely additive probability measure on $\cB_A$ and, moreover, this measure takes only \emph{finitely many} values (see also \cite[Section 5]{CoLSGT}). So we have the following conclusion, which can also be proved directly via the same inductive argument underlying Lemma \ref{lem:tech}.

\begin{corollary}\label{cor:UP}
If $A\seq G$ is stable then $\Stab^{\mu}_0(A)=\Stab^\mu_\epsilon(A)$ for some $\epsilon>0$.
\end{corollary}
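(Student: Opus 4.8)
\textbf{Proof proposal for Corollary \ref{cor:UP}.}

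The plan is to run the same inductive skeleton as in Lemma \ref{lem:tech}, but with the pseudofinite/finitary counting measure $\mu$ on $X$ replaced by the canonical invariant measure $\mu$ on $\cB$, and with $X=G$ effectively replaced by ``the whole group of measure $1$'' — so all the normalizations that appeared as $|X\inv X|\leq r|X|$ become trivial ($r=1$) since $\mu$ is a genuine bi-invariant probability measure. Concretely, suppose toward a contradiction that $\Stab^\mu_0(A)\subsetneq\Stab^\mu_\epsilon(A)$ for every $\epsilon>0$. Fix $k$ with $A$ $k$-stable, and recall from Proposition \ref{prop:smd} that $\phi_A(x;y,z)$ (``$x\in Ay\smd Az$'') is $k_*$-stable and right-invariant. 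The failure of the corollary says: for every $\epsilon>0$ there is some $g\in G$ with $0<\mu(Ag\smd A)\leq\epsilon$; applying right-invariance to push $Ag\smd A$ around, this gives, for every $\epsilon>0$, an instance $\phi_A(x;y,z)$ of positive measure $\leq\epsilon$. This is the analogue of condition $(\dagger)$ in the proof of Lemma \ref{lem:tech}, except we no longer have a lower bound $\sigma(\eta)$ forced by the measure only taking finitely many values — and that is precisely the gap we must exploit.

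The key steps, in order: (1) Set up the measure-theoretic genericity tool. Using the claim inside the proof of Theorem \ref{thm:SARm} (i.e.\ \cite[Lemma 4.3]{ChStNIP} together with the $\epsilon$-net theorem), record that if $B\in\cB$ with $\VC_\ell(B)\leq d$ and $\mu(B)>\alpha$, then $B$ is $O(d\alpha^{\nv2})$-generic in $G$; this replaces the appeal to Corollary \ref{cor:VC}$(a)$ inside the induction. (2) Run the induction on $1\leq n\leq k_*$ exactly as in Lemma \ref{lem:tech}: maintain tuples $b_1,\dots,b_n$ (instances of $\phi_A$) so that each Boolean cell $\phi^n_t(x;b_1,\dots,b_t)$ has $\mu$-measure bounded below by a fixed positive constant $c_n$ depending only on $k$ and $\epsilon$; the inductive step uses $(\dagger)$ to grab a new instance $C$ of small but positive measure, then averages $\mu(B\cap Cg)$ over $g$ using bi-invariance of $\mu$, exactly as before. (3) After $k_*$ steps, extract witnesses $a_1,\dots,a_{k_*}\in G$ realizing the order pattern $\phi_A(a_i,b_j)\iff i\geq j$, reverse indices, and contradict $k_*$-stability of $\phi_A$. (4) Finally note that once $\Stab^\mu_0(A)=\Stab^\mu_\epsilon(A)$, this common set is closed under inverses (by symmetry of $\smd$) and under products (since $\mu((Axy)\smd A)\leq\mu((Axy)\smd(Ay))+\mu((Ay)\smd A)=\mu((Ax)\smd A)+\mu((Ay)\smd A)$ using right-invariance), hence is a subgroup, which is the asserted shape of the conclusion.

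The point where this differs from Lemma \ref{lem:tech}, and the main obstacle, is the very first move: in the finite (or pseudofinite) case one derives $(\dagger)$ from the hypothesis of the lemma \emph{plus} the fact that the measure on $\cB_A$ takes only finitely many values, so that ``$\Stab_\eta$ grows as $\eta$ grows'' is automatically witnessed by a jump of definite size; here we are instead \emph{assuming} the negation of the corollary, i.e.\ that $\Stab^\mu_\epsilon(A)$ keeps strictly growing as $\epsilon\to 0^+$, and must check this directly yields a $b\in Y$ with $0<\mu(\phi_A(x;b))\leq\eta$ for every target $\eta$ — which it does, by choosing $g$ with $\Stab^\mu_0(A)\ni\!\!\!\!/\,\, g\in\Stab^\mu_\eta(A)$ and noting $\mu(Ag\smd A)>0$ by definition of $\Stab^\mu_0$. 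Once $(\dagger)$ is in hand, the induction is a routine transcription of Lemma \ref{lem:tech}'s proof with $r=1$ and $|{\cdot}|/|X|$ replaced by $\mu$, so I would simply remark ``by the same inductive argument underlying Lemma \ref{lem:tech}'' rather than reproduce it; alternatively the whole statement drops out of the cited uniqueness/finite-range result \cite[Theorem 2.3]{CPT} (see also \cite[Section 5]{CoLSGT}), which is the route the surrounding text already points to.
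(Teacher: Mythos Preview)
Your proposal is correct and tracks precisely the two routes the paper itself points to just before the corollary: the finite-range result \cite[Theorem 2.3]{CPT} (your closing remark) and the direct inductive argument of Lemma \ref{lem:tech} transported to $\mu$ (your steps (1)--(3)). One small correction: the lower bounds $c_n$ in your induction cannot be taken to depend only on $k$ and $\epsilon$, since at each step the measure $\alpha=\mu(C)$ of the newly chosen instance has no uniform positive lower bound (your $(\dagger)$ only gives $\alpha>0$), and the genericity constant---hence $c_{n+1}$---depends on this $\alpha$; this is harmless, however, because you only need $c_n>0$ for the finitely many steps $n\leq k_*$, and step (4) is superfluous since the corollary does not assert the subgroup property (that is already noted before the statement).
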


For example, consider the case when $G=\prod_{\cU}G_i$ is an ultraproduct of finite groups and $A=\prod_{\cU}A_i$ is a stable internal subset of $G$. Then $\mu{\upharpoonright}\cB_A$ agrees with the normalized pseudofinite counting measure. So in this case, Corollary \ref{cor:UP} provides a qualitative nonstandard formulation of Corollary \ref{cor:tech} .

\subsection*{Acknowledgements} I would like to thank Caroline Terry and Julia Wolf for helpful discussions, and the anonymous referee for several corrections.

\end{document}